\newtheorem{theorem}{Theorem}[section]
\newtheorem{definition}[theorem]{Definition}
\newtheorem{lemma}[theorem]{Lemma}
\newtheorem{proposition}[theorem]{Proposition}
\newtheorem{remark}[theorem]{Remark}
\newenvironment{taggedtheorem}[1]
 {\taggedtheoremx}
 {\endtaggedtheoremx}
\newenvironment{proof}[1][Proof]{\textbf{#1.} }{\hfill\rule{0.5em}{0.5em}}
{\catcode`\@=11\global\let\AddToReset=\@addtoreset
\AddToReset{equation}{section}

\AddToReset{theorem}{section}

\title{Gradient estimates via Riesz potentials and fractional maximal operators for quasilinear elliptic equations with applications}

\author{Minh-Phuong Tran\footnote{Corresponding author.} \thanks{Applied Analysis Research Group, Faculty of Mathematics and Statistics, Ton Duc Thang University, Ho Chi Minh city, Vietnam; \texttt{tranminhphuong@tdtu.edu.vn}}, Thanh-Nhan Nguyen\thanks{Department of Mathematics, Ho Chi Minh City University of Education, Ho Chi Minh city, Vietnam; \texttt{nhannt@hcmue.edu.vn}}}

\date{\today}

\begin{document}
 
\maketitle
\begin{abstract}
In this paper, the aim of our work is to establish global weighted gradient estimates via fractional maximal functions and the point-wise regularity estimates of Dirichlet problem for divergence elliptic equations of the type
\begin{align*}
\mathrm{div}(A(x,\nabla u)) =  \mathrm{div}(f) \ \text{in} \  \Omega, \mbox{ and } \ u =  g \ \text{on} \  \partial \Omega,
\end{align*}
that related to Riesz potentials. Here, in our setting, $\Omega \subset \mathbb{R}^n$, $n \ge 2$ is a bounded Reifenberg flat domain (that its boundary is sufficiently flat in sense of Reifenberg) and the small-BMO condition (small bounded mean oscillations) is assumed on the nonlinearity $A$. Further, the emphasis of the paper is the existence of weak solution to a class of quasilinear elliptic equations containing Riesz potential of the gradient term, as an application of the global point-wise bound. And regarding this study, we also analyze the necessary and sufficient conditions that guarantee the existence of solution to such nonlinear elliptic problems. 
\noindent 

\medskip

\noindent {\emph{Keywords:}} Gradient estimates; weighted Lorentz spaces; quasilinear elliptic equation; good-$\lambda$; Reifenberg flat domains;   fractional maximal functions; Riesz and Wolff potentials.

\end{abstract}   
                  
\tableofcontents
\section{Introduction and the statement of main results}
\label{sec:intro}
In this paper, we are concerned with a class of nonhomogeneous quasilinear elliptic problem
\begin{align}
\tag{P}
\label{eq:diveq}
\begin{cases}
\mathrm{div}(A(x,\nabla u)) &= \ \mathrm{div}(f) \quad \text{in} \ \ \Omega, \\
\hspace{1.2cm} u &= \ g \quad \qquad \text{on} \ \ \partial \Omega,
\end{cases}
\end{align}
where $\Omega \subset \mathbb{R}^n$ is an open subset, $n \ge 2$; the function $A: \Omega \times \mathbb{R}^n \to \mathbb{R}^n$ is a Carath\'edory vector field satisfying
\begin{align*}
\begin{cases}
\left| A(x,\xi) \right| \le \Lambda_1 |\xi|^{p-1},\\
\langle A(x,\xi)-A(x,\eta), \xi - \eta \rangle \ge \Lambda_2 \left( |\xi|^2 + |\eta|^2 \right)^{\frac{p-2}{2}}|\xi - \eta|^2,
\end{cases}
\end{align*}
for every $(\xi,\eta) \in \mathbb{R}^n \times \mathbb{R}^n \setminus \{(0,0)\}$ and a.e. $x \in \mathbb{R}^n$, $\Lambda_1$ and $\Lambda_2$ are positive constants. The left-hand side operator $\mathrm{div}(A(x,\nabla u))$ is considered as the more general form of the $p$-Laplacian $\Delta_pu :=-\mathrm{div}(|\nabla u|^{p-2}\nabla u)$. This operator and its properties will be clarified in Section \ref{sec:pre}. Additionally, one has the boundary data $g \in W^{1,p}(\Omega;\mathbb{R})$ for some $p>1$ and $f \in L^{p'}$, $p'$ is the exponent conjugate to $p$. Problems of type \eqref{eq:diveq} have been devoted to study in the last several decades, specifically the elliptic equations involving $p$-Laplacian. As well-known, problems involving $p$-Laplace operator typically arise in contexts of physical phenomena and have a wide range of applications, such as nonlinear elasticity, reaction-diffusion problem, and the study of non-Newtonian fluid, etc. \cite{Ruzicka, Zhikov,Lindq}.

Many recent studies have been focused on the regularity theory for nonlinear elliptic equations, where the nonlinearities are formulated around the $p$-Laplacian. The question of optimal regularity for elliptic equations divergence form has attracted a lot of attention of mathematicians for many years. Regularity of solutions to elliptic problems with homogeneous Dirichlet problem (zero Dirichlet boundary data) is a classical topic. Various mathematical techniques have been developed to obtained local integrability/gradient estimates for such problems \cite{CP1998, Mi3}. And later, there have been various results pertaining to up-to-boundary (or global) regularity estimates when suitable assumptions are given on $\partial \Omega$, the vector field $A$ and functional data $f,g$. The reader can find a plenty of materials related to this topic by E. DiBenedetto in \cite{DiBenedetto1983, Lieberman1984, Lieberman1986, Lieberman1988,DiBenedetto1993}; by T. Iwaniec in \cite{Iwaniec}; S. Byun \textit{et al.} in \cite{SSB4, SSB2, BW1_1, BW2} and further generalization to this type of homogeneous equation are the subjects of \cite{CM2014, Phuc2015, CoMi2016, BCDKS, BW1, KZ,Mi1} and their related references.  Later, more extensions of regularity to the non-homogeneous quasilinear elliptic equations of the form \eqref{eq:diveq} in Lorentz-type spaces, as well as in Morrey-type spaces were discussed and addressed in many papers, such as \cite{Truyen2018,Tuoc2018,MPTN2019}. Specifically, in \cite{MPTN2019}, we established the global estimates for gradients of solutions to problem \eqref{eq:diveq} via the use of fractional maximal operators $\mathbf{M}_\alpha$. According to the past papers by Kuusi \textit{et al.} \cite{KMin2013, KMin2014}, authors mentioned the important role of Hardy-Littlewood and fractional maximal operators in studying the theory of partial differential equations,  differentiability properties of functions, singular integrals, etc. Fractional maximal function has a relation to Riesz potential (fractional integral operator) due to the following point-wise inequality:
\begin{align*}
\mathbf{M}_\alpha u(x) \le C \mathbf{I}_\alpha u(x), \quad \text{for every} \ x \in \mathbb{R}^n.
\end{align*}
Moreover, as shown in \cite[Theorem 1]{Muck74}, the converse inequality holds in its integral form as below:
\begin{align*}
\int_{\mathbb{R}^n}{(\mathbf{I}_\alpha u)^q \omega dx} \le C \int_{\mathbb{R}^n}{(\mathbf{M}_\alpha u)^q\omega dx}, \quad \text{for} \ q>0 \ \text{and} \ \omega\in \mathcal{A}_\infty.
\end{align*}
Therefore, gradient estimates for solutions to elliptic problems via fractional maximal functions not only provide information of size and oscillations of solutions and their derivatives, but also allow to bound fractional derivatives of $u$: $\partial^\alpha u$, for $0 \le \alpha <2$. Readers may consult \cite{KMin2014} and references given there to explore more.

There are two main studies in this paper. The first one is devoted to the study of global gradient estimates for solutions to \eqref{eq:diveq} in terms of fractional maximal operators $\mathbf{M}_\alpha$. A point worth emphasizing here is that, for better results than regularity treated in our previous paper \cite{MPTN2019}, both interior and boundary results will be obtained under an additional structural assumption on $A$ (that satisfies the small-BMO condition) and geometric assumption on $\partial \Omega$ (Reifenberg flat domain). For the second result in this paper, we are interested in finding point-wise gradient estimates for solutions to \eqref{eq:diveq} in terms of both Riesz potentials and fractional maximal functions as mentioned above. It is known that in recent papers \cite{KMin2013,KMin2014,Mi4}, Kuusi and Mingione firstly proved the point-wise gradient estimates for solutions to elliptic equations with measure data using linear and nonlinear potentials. The approach we take lies close in spirit to such ideas, and a result of point-wise estimate by Riesz potential for gradient of solutions will be established here. It is worthwhile to note that we pay especial attention to gradient estimates preserved under the fractional maximal functions $\mathbf{M}_\alpha$. Besides, this work also deals with the study of the existence result for problem when the gradient source term is driven under a certain Riesz potential. Among the recent works that studied the existence of solutions with gradient source term as in \cite{BNV2020, 55QH4, Phuc2010, Phuc2008, Phuc2014} and so on, results obtained in this paper can be a contribution towards the understanding of regularity theory and applications to many types of nonlinear problems.

Let us now give precise statements of our main results, via some main theorems presented as below. The following theorem  \ref{theo:regularity} establishes the estimates on gradients of solutions in terms of fractional maximal functions. Further, our results deal with data in the setting of Lorentz spaces with Muckenhoupt weights. 

\begin{taggedtheorem}{A}\label{theo:regularity}
Let $p>1$,  $f \in L^{\frac{p}{p-1}}(\Omega; \mathbb{R}^n)$, $g \in W^{1,p}(\Omega;\mathbb{R})$ and $u$ be a weak solution to \eqref{eq:diveq}. For any $\alpha \in [0,n)$, $\omega \in \mathcal{A}_\infty$, $0<q<\infty$, $0<s\le \infty$, there exists a constant $\delta = \delta(n,p,[\omega]_{\mathcal{A}_\infty})$ such that if $\Omega$ is a $(\delta,R_0)$-Reifenberg flat domain satisfying $[A]_{R_0} \le \delta$ for some $R_0>0$, then
\begin{align}\label{eq:reg}
\|\mathbf{M}\mathbf{M}_{\alpha}(|\nabla u|^p)\|_{L^{q,s}_{\omega}(\Omega)} \le C \|\mathbf{M}_{\alpha}(|f|^{\frac{p}{p-1}} + |\nabla g|^p)\|_{L^{q,s}_{\omega}(\Omega)}.
\end{align}
The constant $C$ here depends only on $n,p,\alpha,q,s,[\omega]_{\mathcal{A}_\infty},\mathrm{diam}(\Omega)/R_0$.
\end{taggedtheorem}
Throughout the paper, the denotation $\mathrm{diam}(\Omega)$ is the diameter of a set $\Omega$ defined as:
\begin{align*}
\mathrm{diam}(\Omega) = \sup\{d(x,y) \  : \ x,y \in \Omega\},
\end{align*}
and $\mathcal{A}_\infty$ the Muckenhoupt weights will be described in Section \ref{sec:pre} later. Moreover, here and hereforth, for simplicity, the set $\{x \in \Omega: |g(x)| > \Lambda\}$ is denoted by $\{|g|>\Lambda\}$. It also emphasizes here that in order to obtain such gradient bound, method of using the `good-$\lambda$' technique is in use and adjusted to the problem \eqref{eq:diveq} with additional assumptions on $A$ and $\partial\Omega$ as aforementioned. This method was first proposed in \cite{AM2007} and later modified, improved in various remarkable papers \cite{55QH4,MPT2018,MPT2019,MPTN2019, BW1, SSB2,Phuc2015,BCDKS}. Recently, it becomes a promising technique adopted in regularity estimates of nonlinear elliptic equations among many other approaches developed during the last years. In Section \ref{sec:main} we will state and prove the good-$\lambda$ theorem that associated with our problem \eqref{eq:diveq}.

The above theorem yields the following point-wise gradient estimates of solutions in term of Riesz potential $\mathbf{I}_\beta$.

\begin{taggedtheorem}{B}\label{theo:I-alpha}
Let $p>1$,  $f \in L^{\frac{p}{p-1}}(\Omega; \mathbb{R}^n)$, $g \in W^{1,p}(\Omega;\mathbb{R})$ and $u$ be a weak solution to \eqref{eq:diveq} in a $(\delta,R_0)$-Reifenberg flat domain $\Omega$ for sufficiently small $\delta>0$, with $[A]_{R_0} \le \delta$ for some $R_0>0$. Then for any $\alpha \in [0,n)$, $\beta \in (0,n)$, $0<q<\infty$, the following point-wise estimate
\begin{align}\label{eq:I-alpha}
\mathbf{I}_{\beta}\left(|\mathbf{M}_{\alpha}(|\nabla u|^p)|^q \chi_{\Omega}\right)(x) \le C \mathbf{I}_{\beta}\left(|\mathbf{M}_{\alpha}(|f|^{\frac{p}{p-1}} + |\nabla g|^p)|^q \chi_{\Omega} \right)(x)
\end{align}
holds for almost everywhere $x \in \mathbb{R}^n$.
\end{taggedtheorem}

Furthermore, as an application of such point-wise gradient bound from Theorem \ref{theo:I-alpha}, in this paper, we are concerned with the existence of solutions to equations of the type:
\begin{align}\tag{Q}
\label{eq:existI}
\begin{cases} -\mathrm{div}(A(x,\nabla u)) & = \ \mathbf{I}_{\beta}(|\nabla u|^p)^q + \mathrm{div}(f), \ \mbox{ in } \ \Omega, \\ \hspace{1.5cm} u & = \  g, \ \mbox{ on } \partial \Omega,\end{cases}
\end{align}
and our proof rests on the well-known Riesz potentials and the Riesz capacity condition (see \cite{Phuc2008, BNV2018, BNV2020, V2,55QH4} for related results). We now respectively state two theorems (Theorem \ref{theo:existence_Ialpha} and Theorem \ref{theoE} below) concerning the necessary and sufficient conditions which address the existence of solutions to problem \eqref{eq:existI}.
\begin{taggedtheorem}{C}\label{theo:existence_Ialpha}
Let $\alpha, \, \beta \in (0,n)$, $p>1$, $\frac{p-1}{p}<q<\frac{n}{n-\beta}$, $f \in L^{\frac{p}{p-1}}(\Omega; \mathbb{R}^n)$ and $g \in W^{1,p}(\Omega;\mathbb{R})$. There exist some positive constants $\delta$, $\varepsilon$ such that if $\Omega$ is a $(\delta,R_0)$-Reifenberg flat domain satisfying $[A]_{R_0} \le \delta$ for some $R_0>0$ and the functional data $|\mathcal{F}|^p: = |f|^{\frac{p}{p-1}} + |\nabla g|^p$ satisfies the following inequality
\begin{align}\label{eq:ReiszCapa_cond}
\mu(K) \le \varepsilon \ \mathrm{Cap}_{\mathbf{I}_{\beta + \frac{1}{q}},\frac{pq}{pq-p+1}}(K),
\end{align}
for any compact set $K \subset \mathbb{R}^n$ with $d\mu  = |\mathcal{F}(x)|^p dx$, then the equation~\eqref{eq:existI} admits at least a solution $u \in W^{1,p}(\Omega)$ and there holds
\begin{align}\label{eq:theo-D}
\mathbf{I}_{\alpha}(|\nabla u|^p)(x) \le \Lambda \mathbf{I}_{\alpha} (|\mathcal{F}|^p)(x), \quad  \mbox{ for a.e. } x \in \mathbb{R}^n,
\end{align}
for a constant $\Lambda>0$.
\end{taggedtheorem}

As in \cite{AH, V2}, the condition \eqref{eq:ReiszCapa_cond} is known as Riesz capacity condition, where the $(\alpha,p)$-capacity ${\mathrm{Cap}}_{\mathbf{I}_\alpha,p}(K)$ corresponds to the Sobolev spaces $W^{\alpha,p}(\mathbb{R}^n)$ of a compact set $K$ is defined by
\begin{align*}
{\mathrm{Cap}}_{\mathbf{I}_\alpha,p}(K) = \inf \left\{\int_{\mathbb{R}^n}{|\phi(x)|^pdx}: \ \phi \in L_+^p(\mathbb{R}^n), \, \mathbf{I}_\alpha[\phi] \ge \chi_K \right\}.
\end{align*}

It is worth mentioning that \eqref{eq:ReiszCapa_cond} is a sufficient condition but not necessary condition for the existence of solutions to \eqref{eq:existI}. Theorem \ref{theoE} below gives \eqref{eq:ReiszCapa_cond-b} as the necessary condition to guarantee the existence. Moreover, it is clear that when $\frac{1}{q} + \frac{\beta}{p} = 1$, the condition makes it both necessary and sufficient for the validity of existence.

\begin{taggedtheorem}{D}\label{theoE}
Let $\beta \in (0,n)$, $p>1$, $q > \max\big\{\frac{p-1}{p},\frac{\beta}{\beta+1}\big\}$ and $\mu \in \mathcal{M}^+(\Omega)$. There exists a positive constant $\delta$ such that if $\Omega$ is a $(\delta,R_0)$-Reifenberg flat domain satisfying $[A]_{R_0} \le \delta$ for some $R_0>0$ and the following equation
\begin{align}\label{eq:existI-b}
\begin{cases} -\mathrm{div}(A(x,\nabla u)) & = \ \mathbf{I}_{\beta}(|\nabla u|^p)^q + \mu, \ \mbox{ in } \ \Omega, \\ \hspace{1.5cm} u & = \  0, \ \mbox{ on } \partial \Omega,\end{cases}
\end{align}
admits a renormalized solution $u$, then one can find a constant $C$ such that
\begin{align}\label{eq:ReiszCapa_cond-b}
\mu(K) \le C \ \mathrm{Cap}_{\mathbf{I}_{\beta + 1- \frac{\beta}{q}},\frac{pq}{pq-p+1}}(K),
\end{align}
for any compact set $K \subset \mathbb{R}^n$. 
\end{taggedtheorem}

The rest of our paper is organized as follows. We start in Section \ref{sec:pre} by introducing and collecting some standard notations, assumptions in which our problem is formulated. Section \ref{sec:comparison} is dedicated to the interior and boundary comparison estimates on the solutions, some preparatory lemmas in this section also present a basic idea that allows us to prove results. In Section \ref{sec:main} we drive the so-called ``good-$\lambda$'' technique to obtain the gradient estimates for the fractional maximal operators and the point-wise gradient bounds for solutions in terms of Riesz potentials, the proofs of gradient estimate theorems are also given in this section. The last section \ref{sec:app} is then devoted to proving Theorem~\ref{theo:existence_Ialpha} and Theorem~\ref{theoE}, an application that may interact with many mathematical or physical equations in many fields of science.

\section{Notations and Preliminaries}
\label{sec:pre}

This section consists of some necessary preliminaries in which our problem is formulated, and we also recall some well-known notations, fundamentals and results for later use herein.  

\subsection{Notation and definitions}
\label{subsec:def}

Throughout the study, we recall that the denotation $B_r(x)$ stands for an open ball in $\mathbb{R}^n$ with radius $r$ and centered at $x$, that is the set $B_r(x) = \{y\in \mathbb{R}^n: |y-x|<r\}$. For convenience of the reader, we use $|B|$ stands for the $n$-dimensional Lebesgue measure of a set $B \subset \mathbb{R}^n$. And in what follows, let us denote by $\displaystyle{\fint_{B_r(x)}{f(y)dy}}$ indicates the integral average (mean value) of $f$ in the variable $y$ over the ball $B_r(x)$, i.e.
\begin{align*}
\fint_{B_r(x)}{f(y)dy} = \frac{1}{|B_r(x)|}\int_{B_r(x)}{f(y)dy}.
\end{align*}

In the present paper, the considered domain $\Omega \subset \mathbb{R}^n$ is assumed to be a bounded $(\delta,R_0)$-Reifenberg flat domain, whose definition is stated as follows. 
\begin{definition}[$(\delta,R_0)$-Reifenberg flat domain]\label{def:Reif} 
Let $\delta \in (0,1)$ and $R_0>0$. We say that $\Omega$ is a $(\delta,R_0)$-Reifenberg flat domain if for every $x\in \partial \Omega$ and $0< r < R_0(1-\delta)$, there exists a coordinates system $\{y_1,y_2,...,y_n\}$ such that in this coordinate system $x = - r\delta/(1-\delta)y_n$ and
\begin{align*}
B_r(0) \cap \{y_n > 0\} \subset B_r(0) \cap \Omega \subset B_r(0) \cap \{y_n > -2 r\delta/(1-\delta) \},
\end{align*}
where we denote the set $\{y = (y_1, y_2, ..., y_n): \ y_n > c\}$ by $\{y_n > c\}$.
\end{definition}

Reifenberg flat domain has its boundary with nice feature, that can be approximated by hyperplanes. This type of domain was first described by Reifenberg in \cite{Reifenberg} when attacking the Plateau problem. In this celebrated paper, he proved that the $(\delta,R_0)$-Reifenberg flat domain for $\delta>0$ small enough represents locally a topological disc. It will be better to understand that  Lipschitz domains with small Lipschitz constant, $C^1$–domains, and von Koch snowflakes or some certain quasi-balls are domains which are `flat' in this sense. Here, we refer the reader to \cite{Reifenberg, Toro1997} for additional details.

Furthermore, in the setting of our problem, the nonlinear operator $A: \Omega \times \mathbb{R}^n \rightarrow \mathbb{R}$ is a Carath\'eodory vector valued function which satisfies the following growth and monotonicity conditions: for some $1<p\le n$ there exist two positive constants $\Lambda_1$ and $\Lambda_2$ such that 
\begin{align}\label{cond:A1}
\left| A(x,\xi) \right| &\le \Lambda_1 |\xi|^{p-1},
\end{align}
and
\begin{align}\label{cond:A2}
\langle A(x,\xi)-A(x,\eta), \xi - \eta \rangle &\ge \Lambda_2 \left( |\xi|^2 + |\eta|^2 \right)^{\frac{p-2}{2}}|\xi - \eta|^2
\end{align}
holds for almost every $x$ in $\Omega$ and every $\xi$, $\eta \in \mathbb{R}^n \setminus \{0\}$. Moreover, in our regularity proofs, the operator $A$ is also assumed to satisfy a sufficiently small bounded mean oscillation (small-BMO) condition, that is described as below.

\begin{definition}[small-BMO condition]\label{def:BMOcond}
Let $\delta >0$ and $R_0>0$. We say that the nonlinearity $A$ satisfies a $(\delta,R_0)$-BMO condition (or shortly said BMO-condition) if
\begin{align*}
[A]_{R_0} = \sup_{y \in \mathbb{R}^n, \ 0<r\le R_0} \left(\fint_{B_r(y)} \sup_{\xi \in \mathbb{R}^n \setminus \{0\}} \frac{|A(x,\xi) - \overline{A}_{B_r(y)}(\xi)|}{|\xi|^{p-1}}dx\right)  \le \delta,
\end{align*}
where $\overline{A}_{B_r(y)}(\xi)$ denotes the integral average of $A(\cdot,\xi)$ over the ball $B_r(y)$.
\end{definition}

Our work is also related to the class of Muckenhoupt's weights $\mathcal{A}_p$. This concept first appeared by Muckenhoupt in \cite{Muck72} and since then, numerous norm inequalities and boundedness of relevant operators have been established for the $\mathcal{A}_p$ classes in various research approaches. The Muckenhoupt classes of weighted functions are closely connected with the boundedness of Hardy-Littlewood maximal functions. Let us recall the definition of the Muckenhoupt weights and derive some of their properties for later use. Here and subsequently, by a weight $\omega$, we mean that $\omega$ is a non-negative measurable and locally integrable function on $\mathbb{R}^n$. For any measurable set $E\subset \mathbb{R}^{n}$ and the weight $\omega$, we denote 
$$\omega(E):=\int_{E}\omega(x)dx.$$ 

\begin{definition}[Muckenhoupt weights]\label{def:Muck}
For $1\le p \le \infty$, we say that a weight $\omega \in L^1_{\mathrm{loc}}(\mathbb{R}^n)$ belongs to the Muckenhoupt class $\mathcal{A}_p$ if there holds
\begin{align*}
[\omega]_{\mathcal{A}_p} = \sup_{B_r(x) \subset \mathbb{R}^n} \left(\fint_{B_r(x)} \omega(y) dy\right)\left(\fint_{B_r(x)}\omega(y)^{-\frac{1}{p-1}}dy\right)^{p-1}<\infty, \mbox{ when } 1<p<\infty,
\end{align*}
\begin{align*}
[\omega]_{\mathcal{A}_1} = \sup_{B_r(x) \subset \mathbb{R}^n} \left(\fint_{B_r(x)} \omega(y) dy\right) \sup_{y \in B_r(x)} \frac{1}{\omega(y)} <\infty, \mbox{ when } p=1,
\end{align*}
and there are two positive constants $C$ and $\nu$ such that
\begin{align*}
\omega(E) \le C \left(\frac{|E|}{|B|}\right)^\nu \omega(B), \mbox{ when } p=\infty,
\end{align*}
for all ball $B=B_r(x)$ in $\mathbb{R}^n$ and all measurable subset $E$ of $B$. In this case, we denote $[\omega]_{\mathcal{A}_\infty} = (C,\nu)$.
\end{definition}

\begin{remark}\label{rem:muck}
In Definition~\ref{def:Muck}, the number $[\omega]_{\mathcal{A}_p}$  is called the ${\mathcal{A}_p}$ constant of $\omega$ and it is well known that $\mathcal{A}_1 \subset \mathcal{A}_p \subset \mathcal{A}_\infty$ for all $1 \le p \le \infty$. Moreover, the Muckenhoupt class $\mathcal{A}_{\infty}$ is given by: $$\mathcal{A}_\infty = \bigcup_{p<\infty}\mathcal{A}_p.$$ 
\end{remark}

In this paper, the study will be made in the setting of weighted Lorentz spaces, defined as below. And for literature that concerning these spaces, the reader refers to \cite{Lorentz1950,Lorentz1951,Gene98} and textbooks \cite{55Gra,Stein} for detailed information.
 
\begin{definition}[Weighted Lorentz space]
Let $0<q<\infty$, $0<s\le \infty$ and the Muckenhoupt weight $\omega \in \mathcal{A}_{\infty}$. We define the weighted Lorentz space $L^{q,s}_{\omega}(\Omega)$ by the set of all Lebesgue measurable functions $h$ on $\Omega$ such that $\|h\|_{L^{q,s}_{\omega}(\Omega)}<+\infty$, where
\begin{align}\label{eq:w-lorentz-a}
\|h\|_{L^{q,s}_{\omega}(\Omega)} = \begin{cases}\left[ q \int_0^\infty{ \lambda^s\omega \left( \{x \in \Omega: |h(x)|>\lambda\} \right)^{\frac{s}{q}} \frac{d\lambda}{\lambda}} \right]^{\frac{1}{s}}, & \ \mbox{ if } s < \infty, \\
\sup_{\lambda>0}{\lambda \omega\left(\{x \in \Omega:|h(x)|>\lambda\}\right)^{\frac{1}{q}}}, & \ \mbox{ if } s = \infty.\end{cases}
\end{align}
\end{definition}
In this way, when $\omega=1$, the weighted Lorentz space $L^{q,s}_{\omega}(\Omega)$ becomes the unweighted (classical) Lorentz space $L^{q,s}(\Omega)$. Moreover, in the case of weighted Lorentz spaces, when $q = s$, $L^{q,s}_{\omega}(\Omega)$ coincides the weighted Lebesgue space $L^{q}_{\omega}(\Omega)$ which is defined by the set of all measurable functions $h$ such that
\begin{align*}
\|h\|_{L^{q}_{\omega}(\Omega)} := \left(\int_{\Omega} |h(x)|^q\omega(x) dx\right)^{\frac{1}{q}} < + \infty.
\end{align*}

\begin{definition}[Riesz potential]
Let $n \ge 2$ and the Riesz potential $\mathbf{I}_\beta$ of order $\beta \in (0,n)$  of a measurable function $h \in L^1_{\mathrm{loc}}(\mathbb{R}^n;\mathbb{R}^+)$ is defined as the convolution:
\begin{align}
\label{def:Riesz}
\mathbf{I}_\beta (h)(x) \equiv (\mathbf{I}_\beta * h)(x) = \int_{\mathbb{R}^n}{\frac{h(y)}{|x-y|^{n-\beta}}dy}, \quad x \in \mathbb{R}^n.
\end{align}
\end{definition}

\begin{definition}[Wolff potential]
Let $\alpha \in (0,n)$ and $1 < \beta <\frac{n}{\alpha}$. The Wolff potential $\mathbf{W}_{\alpha,\beta}(\nu)$ of a non-negative Borel measure $\nu$ is defined as the convolution:
\begin{align*}
\mathbf{W}_{\alpha,\beta} (\nu)(x) = \int_0^{\infty} \left(\frac{\nu(B_r(x))}{r^{n-\alpha\beta}}\right)^{\frac{1}{\beta -1}} \frac{dr}{r}, \quad x \in \mathbb{R}^n.
\end{align*}
\end{definition}
We write $\mathbf{W}_{\alpha,\beta} (h)$ instead of $\mathbf{W}_{\alpha,\beta} (\nu)$ if $d\nu = hdx$, where $h \in L^1_{\mathrm{loc}}(\mathbb{R}^n;\mathbb{R}^+)$. We also remark that $\mathbf{I}_{\alpha}(\nu) = \mathbf{W}_{\frac{\alpha}{2},2}(\nu)$.

\subsection{Fractional Maximal functions}\label{sec:MMalpha}

We now recall the definition of fractional maximal function that regarding to \cite{K1997, KS2003}. Let $0 \le \alpha \le n$, the fractional  maximal function $\mathbf{M}_\alpha$ of a locally integrable function $h \in L^1_{\mathrm{loc}}(\mathbb{R}^n;\mathbb{R})$ is defined by:
\begin{align*}
\mathbf{M}_\alpha h(x) = \sup_{\rho>0}{\rho^\alpha \fint_{B_\rho(x)}{|h(y)|dy}}, \qquad x \in \mathbb{R}^n.
\end{align*}
It is worth to remark that for the case $\alpha=0$, the fractional  maximal function $\mathbf{M}_\alpha$ becomes the Hardy-Littlewood maximal function $\mathbf{M}$. The standard and classical properties of the maximal function $\mathbf{M}$ can be found in many places, see for instance \cite{Gra97,55Gra}. Here we recall some well-known properties of maximal and fractional maximal operators, that will be shown in some following lemmas. The reader is referred to \cite{55Gra} for details.
\begin{lemma} \label{lem:boundM}
The maximal function $\mathbf{M}$ is bounded from $L^q(\mathbb{R}^n)$ to $L^{q,\infty}(\mathbb{R}^n)$, for $q \ge 1$, i.e., there exists a positive constant $C$ such that
\begin{align*}
\left|\{x \in \mathbb{R}^n: \ \mathbf{M}h(x)>\lambda\}\right| \le \frac{C}{\lambda^q}\int_{\mathbb{R}^n}{|h(x)|^q dx}, 
\end{align*}
for all $\lambda>0$ and  $h \in L^q(\mathbb{R}^n)$.
\end{lemma}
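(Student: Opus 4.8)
The plan is to deduce the general $q\ge 1$ case from the classical weak-type $(1,1)$ inequality for $\mathbf{M}$, and then to prove that inequality by a Vitali covering argument.

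First I would reduce to $q=1$. Since $t\mapsto t^{q}$ is convex for $q\ge 1$, Jensen's inequality applied on each ball gives $\fint_{B_{\rho}(x)}|h(y)|\,dy\le\big(\fint_{B_{\rho}(x)}|h(y)|^{q}\,dy\big)^{1/q}$, hence the pointwise bound $\mathbf{M}h(x)\le\big(\mathbf{M}(|h|^{q})(x)\big)^{1/q}$. Therefore $\{\mathbf{M}h>\lambda\}\subset\{\mathbf{M}(|h|^{q})>\lambda^{q}\}$, and once the weak-type $(1,1)$ estimate $|\{\mathbf{M}w>t\}|\le C t^{-1}\int_{\mathbb{R}^{n}}|w(x)|\,dx$ is available, applying it with $w=|h|^{q}$ and $t=\lambda^{q}$ yields precisely the inequality in the statement. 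Note also that $\mathbf{M}h$ is lower semicontinuous, so $\{\mathbf{M}h>\lambda\}$ is open and in particular measurable.

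It remains to establish the weak-type $(1,1)$ bound. Fix $\lambda>0$, write $E_{\lambda}=\{x\in\mathbb{R}^n:\mathbf{M}h(x)>\lambda\}$, and observe that by inner regularity of Lebesgue measure it suffices to estimate $|K|$ for an arbitrary compact $K\subset E_{\lambda}$. For each $x\in K$, by definition of $\mathbf{M}$ there is a radius $\rho_{x}>0$ with $\fint_{B_{\rho_{x}}(x)}|h|>\lambda$, i.e. $|B_{\rho_{x}}(x)|<\lambda^{-1}\int_{B_{\rho_{x}}(x)}|h(y)|\,dy$. The balls $\{B_{\rho_{x}}(x)\}_{x\in K}$ cover $K$; extracting a finite subcover and applying the finite version of the Vitali covering lemma, I obtain pairwise disjoint balls $B_{1},\dots,B_{N}$ from this subcover with $K\subset\bigcup_{j=1}^{N}5B_{j}$. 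Then $|K|\le\sum_{j}|5B_{j}|=5^{n}\sum_{j}|B_{j}|\le 5^{n}\lambda^{-1}\sum_{j}\int_{B_{j}}|h|\le 5^{n}\lambda^{-1}\int_{\mathbb{R}^{n}}|h(y)|\,dy$, where the last inequality uses disjointness of the $B_{j}$. Taking the supremum over compact $K\subset E_{\lambda}$ gives $|E_{\lambda}|\le 5^{n}\lambda^{-1}\|h\|_{L^{1}(\mathbb{R}^{n})}$, the weak-type $(1,1)$ estimate with $C=5^{n}$.

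This is a standard fact, so there is no real obstacle; the only points needing a little care are the measurability of $E_{\lambda}$ (handled by lower semicontinuity of $\mathbf{M}h$) and the passage to a finite disjoint subfamily in the covering step, which is precisely why one first restricts to a compact subset. If a constant free of any dilation factor were desired, one could invoke the Besicovitch covering lemma in place of Vitali, but this refinement is not needed for the stated conclusion.
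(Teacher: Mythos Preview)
Your argument is correct and is essentially the standard textbook proof: reduce $q\ge 1$ to $q=1$ via the pointwise bound $\mathbf{M}h\le(\mathbf{M}(|h|^{q}))^{1/q}$ from Jensen's inequality, and then establish the weak-type $(1,1)$ estimate by a Vitali covering argument. The paper does not give its own proof of this lemma; it simply cites Grafakos~\cite{55Gra}, where exactly this approach (Vitali $5$-covering for the $(1,1)$ bound, then interpolation or the Jensen reduction) is carried out, so your proposal matches what the cited reference does.
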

\begin{lemma}\label{lem:boundMlorentz}
Let $q>1$ and $0 < s \le \infty$, there exists a positive constant $C$ such that
\begin{align*}
\|\mathbf{M}h\|_{L^{q,s}(\Omega)} \le C \|h\|_{L^{q,s}(\Omega)},
\end{align*}
for all $h \in L^{q,s}(\Omega)$.
\end{lemma}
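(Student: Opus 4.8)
The final statement is Lemma~\ref{lem:boundMlorentz}, the boundedness of the Hardy--Littlewood maximal operator $\mathbf{M}$ on the (unweighted) Lorentz space $L^{q,s}(\Omega)$ for $q>1$ and $0<s\le\infty$. The plan is to deduce this from the two endpoint facts already available, namely the weak-type bound of Lemma~\ref{lem:boundM} ($\mathbf{M}\colon L^1\to L^{1,\infty}$) together with the trivial strong-type bound $\mathbf{M}\colon L^\infty\to L^\infty$, and then to interpolate. Concretely, I would first extend functions on $\Omega$ by zero to all of $\mathbb{R}^n$ (this only enlarges the maximal function on $\Omega$ by a controlled amount, or one works directly with a restricted maximal operator), so that the $\mathbb{R}^n$-statements of Lemma~\ref{lem:boundM} apply. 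Then I invoke the Marcinkiewicz interpolation theorem in its Lorentz-space formulation: a sublinear operator that is simultaneously of weak types $(1,1)$ and $(\infty,\infty)$ is bounded on $L^{q,s}$ for every $1<q<\infty$ and every $0<s\le\infty$, with operator norm depending only on $n,q,s$. This immediately yields $\|\mathbf{M}h\|_{L^{q,s}(\mathbb{R}^n)}\le C\|h\|_{L^{q,s}(\mathbb{R}^n)}$, and restricting back to $\Omega$ gives the claim.

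If one prefers a self-contained argument that does not quote the Lorentz-valued Marcinkiewicz theorem as a black box, the alternative route is a direct computation using the distribution function. Writing $\lambda_h(t)=|\{|h|>t\}|$, one splits $h=h\chi_{\{|h|>t/2\}}+h\chi_{\{|h|\le t/2\}}$ for each level $t$; the second piece has sup-norm $\le t/2$ so contributes nothing to $\{\mathbf{M}h>t\}$, and applying the weak $(1,1)$ bound to the first piece gives the pointwise-in-$t$ estimate
\begin{align*}
\lambda_{\mathbf{M}h}(t)\le \frac{C}{t}\int_{\{|h|>t/2\}}|h(x)|\,dx = \frac{C}{t}\int_0^\infty \lambda_{h\chi_{\{|h|>t/2\}}}(s)\,ds .
\end{align*}
Feeding this into the definition~\eqref{eq:w-lorentz-a} (with $\omega=1$) and using Fubini and Hardy's inequality to handle the resulting double integral in $(t,s)$ produces $\|\mathbf{M}h\|_{L^{q,s}}\le C\|h\|_{L^{q,s}}$; the hypothesis $q>1$ is exactly what makes the Hardy-type integral converge, and the case $s=\infty$ is the direct weak-type estimate $t\,\lambda_{\mathbf{M}h}(t)^{1/q}\lesssim t\,\lambda_h(t/2)^{1/q}$ read off from the line above.

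The main obstacle is not conceptual but bookkeeping: one must be careful with the passage between $\Omega$ and $\mathbb{R}^n$ (the maximal function is a global object, so the inequality on $\Omega$ is really the restriction of the inequality on $\mathbb{R}^n$ applied to the zero-extension), and, in the self-contained version, with the constants in the Hardy-type inequality, which blow up as $q\downarrow 1$ — consistent with the failure of the $L^{1}$ bound. Since the statement is cited from~\cite{55Gra}, I expect the paper simply to refer to the reference; the interpolation sketch above is the standard justification, and no step presents a genuine difficulty beyond the usual care with weak-type splitting.
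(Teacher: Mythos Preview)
Your proposal is correct, and you anticipated the situation exactly: the paper gives no proof of this lemma at all, merely citing it from Grafakos~\cite{55Gra}. Your interpolation argument (weak $(1,1)$ plus $L^\infty$ via Lorentz-valued Marcinkiewicz, or the direct distribution-function computation) is the standard justification and is entirely sound.
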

Moreover, a very important property of fractional maximal function was also obtained from the boundedness property of maximal function. The detail proof of this result can be found in~\cite{MPT2019}.
\begin{lemma}\label{lem:Malpha}
Let $0\le \alpha<n$ and for any locally integrable function $h \in L^1_{\mathrm{loc}}(\mathbb{R}^n)$ there holds
\begin{equation*}
{\left|\left\{x \in \mathbb{R}^n: \ \mathbf{M}_\alpha h(x) >\lambda\right\}\right|} \leq  C\left(\frac{1}{\lambda}\int_{\mathbb{R}^n}|h(x)|dx\right)^{\frac{n}{n-\alpha}},
\end{equation*}
for all $\lambda>0$.
\end{lemma}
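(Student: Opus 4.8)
\textbf{Proof proposal for Lemma~\ref{lem:Malpha}.} The plan is to reduce the weak-type bound for the fractional maximal operator $\mathbf{M}_\alpha$ to the already-stated weak-type bound for the Hardy–Littlewood maximal operator $\mathbf{M}$ (Lemma~\ref{lem:boundM} with exponent $q=1$). First I would fix $\lambda>0$ and $x \in \mathbb{R}^n$ with $\mathbf{M}_\alpha h(x) > \lambda$. By definition of the supremum there is a radius $\rho = \rho_x > 0$ such that
\begin{align*}
\rho^\alpha \fint_{B_\rho(x)} |h(y)|\,dy > \lambda.
\end{align*}
The key observation is that one can trade the factor $\rho^\alpha$ against the average: writing the left-hand side as $\rho^\alpha \cdot |B_\rho(x)|^{-1} \int_{B_\rho(x)} |h|$ and using $|B_\rho(x)| = c_n \rho^n$, the condition becomes $\rho^{\alpha - n} \int_{B_\rho(x)} |h| > c_n \lambda$. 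Solving for $\rho$ gives a lower bound $\rho^{n-\alpha} < c_n^{-1}\lambda^{-1}\int_{B_\rho(x)}|h| \le c_n^{-1}\lambda^{-1}\|h\|_{L^1(\mathbb{R}^n)}$, hence $\rho$ is bounded in terms of $\lambda$ and $\|h\|_{L^1}$. Feeding this bound on $\rho$ back in — more precisely, splitting $\rho^\alpha = \rho^{\alpha(n-\alpha)/(n-\alpha)}$ and distributing the exponents carefully — lets me convert the fractional average into a bound of the form $\mathbf{M}h(x) \gtrsim \lambda^{n/(n-\alpha)} \|h\|_{L^1}^{-\alpha/(n-\alpha)}$; that is, the superlevel set $\{\mathbf{M}_\alpha h > \lambda\}$ is contained in $\{\mathbf{M}h > c\,\lambda^{n/(n-\alpha)}\|h\|_{L^1}^{-\alpha/(n-\alpha)}\}$.

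With this inclusion in hand, I would simply apply Lemma~\ref{lem:boundM} at exponent $q=1$ to the right-hand superlevel set:
\begin{align*}
\left|\{x: \mathbf{M}_\alpha h(x) > \lambda\}\right| \le \left|\left\{x: \mathbf{M}h(x) > c\,\lambda^{\frac{n}{n-\alpha}}\|h\|_{L^1}^{-\frac{\alpha}{n-\alpha}}\right\}\right| \le \frac{C\,\|h\|_{L^1}^{\frac{\alpha}{n-\alpha}}}{c\,\lambda^{\frac{n}{n-\alpha}}}\int_{\mathbb{R}^n}|h(x)|\,dx,
\end{align*}
which collapses to $C \lambda^{-n/(n-\alpha)}\left(\int_{\mathbb{R}^n}|h|\right)^{n/(n-\alpha)}$, exactly the claimed inequality. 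The case $\alpha = 0$ is just Lemma~\ref{lem:boundM} verbatim, so I may assume $0 < \alpha < n$, in which case $n-\alpha > 0$ and all the exponent manipulations above are legitimate.

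I expect the only delicate point to be bookkeeping the exponents in the passage from the fractional average to the ordinary maximal function: one must verify that the powers of $\rho$ cancel so that what survives is genuinely $\fint_{B_\rho}|h|$ raised to a power times a power of $\lambda$ and $\|h\|_{L^1}$, with the exponent $n/(n-\alpha)$ emerging correctly. A clean way to organize this — which I would adopt — is to raise the defining inequality $\rho^\alpha \fint_{B_\rho(x)}|h| > \lambda$ to the power $n/(n-\alpha)$ after first writing $\rho^\alpha \fint_{B_\rho(x)}|h| = \left(\rho^n \fint_{B_\rho(x)}|h|\right)^{\alpha/n}\left(\fint_{B_\rho(x)}|h|\right)^{1-\alpha/n}$ and bounding $\rho^n \fint_{B_\rho(x)}|h| \le c_n^{-1}\|h\|_{L^1(\mathbb{R}^n)}$; this isolates $\mathbf{M}h(x)$ as the remaining factor with the right power. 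Since $h \in L^1_{\mathrm{loc}}$ but the statement quantifies over all $\lambda>0$, the inequality is vacuous (both sides infinite, or the left side anyway trivially bounded) unless $h \in L^1(\mathbb{R}^n)$, so there is no loss in assuming finiteness of $\int_{\mathbb{R}^n}|h|$ throughout.
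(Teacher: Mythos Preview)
Your argument is correct: the pointwise factorization
\[
\rho^\alpha \fint_{B_\rho(x)}|h| = \Bigl(\rho^n \fint_{B_\rho(x)}|h|\Bigr)^{\alpha/n}\Bigl(\fint_{B_\rho(x)}|h|\Bigr)^{1-\alpha/n} \le c_n^{-\alpha/n}\|h\|_{L^1}^{\alpha/n}\,(\mathbf{M}h(x))^{1-\alpha/n}
\]
is exactly the Hedberg-type inequality that reduces the weak-type bound for $\mathbf{M}_\alpha$ to the weak-type $(1,1)$ bound for $\mathbf{M}$, and the exponent bookkeeping you describe is accurate.

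Note, however, that the paper does not actually supply its own proof of this lemma: it simply cites~\cite{MPT2019} for the details. So there is no in-paper argument to compare against. Your proof is the standard one and would be a perfectly acceptable substitute for the citation.
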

Due to the importance of fractional maximal operators, recently in~\cite{MPT2019}, we define an additional cut-off maximal function ${\mathbf{M}}^r_{\alpha}$ of a locally integrable function $h$ that corresponding to ${\mathbf{M}}_{\alpha}$ as follows: for $0\le \alpha \le n$ and $r>0$, 
\begin{align*}
{\mathbf{M}}^r_{\alpha}h(x)  = \sup_{0<\rho<r} \rho^{\alpha} \fint_{B_\rho(x)}h(y)dy, \qquad x \in \mathbb{R}^n.
\end{align*}
In the proof-of-work of the same paper~\cite{MPT2019}, we are concerned with an interesting property of the cut-off maximal function ${\mathbf{M}}^r_{\alpha}$, which we state in lemma below. This leads us to the key tools to achieve results in the sequel. We address the reader to~\cite{MPT2019} for the proof of this lemma.
\begin{lemma}\label{lem:MrMr}
Let $0\le \alpha <n$ and $r>0$. There exists a constant $C=C(n,\alpha)>0$ such that
\begin{align}\label{eq:MrMr}
{\mathbf{M}}^r {\mathbf{M}}^r_{\alpha} h(x) \le C {\mathbf{M}}^{2r}_{\alpha}h(x), \mbox{ for all } x \in \mathbb{R}^n,
\end{align}
for any $h \in L^1_{\mathrm{loc}}(\mathbb{R}^n)$.
\end{lemma}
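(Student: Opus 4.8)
The inequality to establish is ${\mathbf{M}}^r {\mathbf{M}}^r_{\alpha} h(x) \le C {\mathbf{M}}^{2r}_{\alpha}h(x)$ for all $x \in \mathbb{R}^n$ and $h \in L^1_{\mathrm{loc}}(\mathbb{R}^n)$, with $C = C(n,\alpha)$. The plan is to unwind the two nested suprema and reduce everything to a single fractional average over a ball of radius at most $2r$. First I would fix $x$ and an arbitrary radius $\tau \in (0,r)$ appearing in the outer ${\mathbf{M}}^r$, and estimate $\fint_{B_\tau(x)} {\mathbf{M}}^r_\alpha h(y)\,dy$. By definition, for each $y \in B_\tau(x)$ we have ${\mathbf{M}}^r_\alpha h(y) = \sup_{0<\rho<r} \rho^\alpha \fint_{B_\rho(y)} |h(z)|\,dz$, so the point is to bound this by a multiple of $(\tau+\rho)^\alpha \fint_{B_{\tau+\rho}(x)} |h|$ or similar, using the inclusion $B_\rho(y) \subset B_{\tau+\rho}(x)$ for $y \in B_\tau(x)$.

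The key geometric step is the standard covering/comparison argument for fractional maximal functions: for $y \in B_\tau(x)$ and $\rho < r$, one has $B_\rho(y) \subseteq B_{\tau + \rho}(x) \subseteq B_{2r}(x)$ when $\tau, \rho < r$, hence
\begin{align*}
\rho^\alpha \fint_{B_\rho(y)} |h(z)|\,dz \le \rho^\alpha \frac{|B_{\tau+\rho}(x)|}{|B_\rho(y)|} \fint_{B_{\tau+\rho}(x)} |h(z)|\,dz = \left(\frac{\rho}{\tau+\rho}\right)^\alpha \frac{(\tau+\rho)^\alpha}{1} \left(\frac{\tau+\rho}{\rho}\right)^{n} \fint_{B_{\tau+\rho}(x)} |h|.
\end{align*}
Taking supremum over $\rho < r$ and using $\rho/(\tau+\rho) \le 1$ together with $\tau + \rho < 2r$, this yields ${\mathbf{M}}^r_\alpha h(y) \le C(n,\alpha)\, {\mathbf{M}}^{2r}_\alpha h(x)$ \emph{uniformly in} $y \in B_\tau(x)$ — the right side no longer depends on $y$. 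Here I must be slightly careful: the factor $(\tau+\rho)^\alpha (\tau+\rho)^n \rho^{-n}$ needs the crude bound $\rho^{-n}(\tau+\rho)^{n+\alpha} \le$ something controlled; since we only need a radius-$2r$ fractional maximal function on the right, I split into the regimes $\rho \ge \tau$ (where $\tau+\rho \le 2\rho$, giving a clean constant $2^{n+\alpha}$) and $\rho < \tau$ (where one instead compares $B_\rho(y)$ to $B_{2\tau}(x)$ directly and absorbs $\rho^\alpha \le \tau^\alpha \le (2\tau)^\alpha$, again with a dimensional constant). Either way the bound is $C(n,\alpha)\,{\mathbf{M}}^{2r}_\alpha h(x)$.

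Once the pointwise-in-$y$ bound ${\mathbf{M}}^r_\alpha h(y) \le C {\mathbf{M}}^{2r}_\alpha h(x)$ holds for all $y \in B_\tau(x)$, averaging over $B_\tau(x)$ is trivial: $\fint_{B_\tau(x)} {\mathbf{M}}^r_\alpha h(y)\,dy \le C {\mathbf{M}}^{2r}_\alpha h(x)$, and since this is independent of $\tau \in (0,r)$, taking the supremum over $\tau$ gives exactly ${\mathbf{M}}^r {\mathbf{M}}^r_\alpha h(x) \le C {\mathbf{M}}^{2r}_\alpha h(x)$. I expect the only real subtlety — the "main obstacle," though it is more a bookkeeping nuisance than a genuine difficulty — to be handling the two radius regimes $\rho \lessgtr \tau$ so that the ratio of ball volumes $|B_{\tau+\rho}|/|B_\rho| = ((\tau+\rho)/\rho)^n$ stays bounded by a constant depending only on $n$ while still landing inside radius $2r$; everything else is the routine inclusion-of-balls estimate that underlies all such maximal function comparisons. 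No weights, no $\mathcal{A}_\infty$ theory, and no properties of $A$ or the domain enter here — this is a purely real-analytic lemma.
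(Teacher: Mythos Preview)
The paper itself does not prove this lemma; it only cites \cite{MPT2019}. So I comment solely on the soundness of your argument.

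Your treatment of the regime $\rho\ge\tau$ is correct. The gap is in the regime $\rho<\tau$: you propose to compare $B_\rho(y)$ with $B_{2\tau}(x)$ and ``absorb $\rho^\alpha\le(2\tau)^\alpha$,'' but enlarging the ball from radius $\rho$ to $2\tau$ multiplies the \emph{average} by the volume ratio $(2\tau/\rho)^n$, which is unbounded as $\rho\to 0$. More to the point, the uniform pointwise bound you are aiming for, $\mathbf{M}^r_\alpha h(y)\le C(n,\alpha)\,\mathbf{M}^{2r}_\alpha h(x)$ for every $y\in B_\tau(x)$, is simply false: if $h$ concentrates near some $y_0\in B_\tau(x)$ with $y_0\neq x$, then $\mathbf{M}^r_\alpha h(y_0)$ is arbitrarily large (since $\alpha<n$) while $\mathbf{M}^{2r}_\alpha h(x)$ stays bounded. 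You therefore cannot estimate $\mathbf{M}^r_\alpha h(y)$ pointwise before averaging in $y$; the outer average is not ``trivial'' here, it is essential.

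For $\alpha>0$ the fix is to use the outer average genuinely. Since $|y-z|<\rho$ forces $|y-z|^{\alpha-n}>\rho^{\alpha-n}$, one has $\sup_{0<\rho<\tau}\rho^\alpha\fint_{B_\rho(y)}|h|\le C_n\int_{B_{2\tau}(x)}|h(z)|\,|y-z|^{\alpha-n}\,dz$ for $y\in B_\tau(x)$. Averaging over $y\in B_\tau(x)$ and applying Fubini, the kernel integral $\int_{B_\tau(x)}|y-z|^{\alpha-n}\,dy\le C(n,\alpha)\,\tau^\alpha$ (finite precisely because $\alpha>0$) yields $\fint_{B_\tau(x)}\mathbf{M}^\tau_\alpha h\,dy\le C(n,\alpha)(2\tau)^\alpha\fint_{B_{2\tau}(x)}|h|\le C\,\mathbf{M}^{2r}_\alpha h(x)$. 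Note the resulting constant behaves like $1/\alpha$; in fact the inequality \eqref{eq:MrMr} fails at the endpoint $\alpha=0$ (take $h=\chi_{B_\epsilon(z_0)}$ with $|z_0-x|=r/2$ and let $\epsilon\to 0$: one computes $\mathbf{M}^r\mathbf{M}^r h(x)\gtrsim(\epsilon/r)^n\log(r/\epsilon)$ while $\mathbf{M}^{2r}h(x)\approx(\epsilon/r)^n$), so no purely ``inclusion-of-balls'' argument can succeed uniformly in $\alpha$.
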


\section{Comparison results}
\label{sec:comparison}

In this section, we present some local interior and boundary comparison estimates for weak solution $u$ of~\eqref{eq:diveq} that are essential to our development later. It is also remarkable that in some statements below and in what follows, we shall adopt the denotation $C$ for a suitable positive constant that is not necessary the same from line to line in each occurrence.

First of all, we can exploit the following integral estimate on gradient of solution $u$ to~\eqref{eq:diveq}, with respect to initial data $f$ and $g$.

\begin{proposition}\label{prop1}
Let $g \in W^{1,p}(\Omega), \ f \in L^{\frac{p}{p-1}}(\Omega)$ and $u$ be a weak solution of~\eqref{eq:diveq}. There exists a positive constant $C = C(n,p,\Lambda_1,\Lambda_2)$ such that
\begin{equation}\label{eq:prop1}
\int_{\Omega} |\nabla u|^p dx \le C \int_{\Omega} \left(|f|^{\frac{p}{p-1}} + |\nabla g|^p \right)dx.
\end{equation}
\end{proposition}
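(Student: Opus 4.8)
The plan is to test the weak formulation of \eqref{eq:diveq} against the function $v = u - \tilde g$, where $\tilde g \in W^{1,p}(\Omega)$ is an extension of the boundary data $g$ with $v \in W_0^{1,p}(\Omega)$ (in our setting $g$ itself lies in $W^{1,p}(\Omega)$, so one may simply take $\tilde g = g$ and use $v = u - g \in W_0^{1,p}(\Omega)$). Since $u$ is a weak solution, this yields
\begin{align*}
\int_\Omega \langle A(x,\nabla u), \nabla u - \nabla g\rangle\, dx = \int_\Omega \langle f, \nabla u - \nabla g\rangle\, dx.
\end{align*}

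The next step is to produce a coercivity estimate for the left-hand side using the monotonicity condition \eqref{cond:A2}. Writing $\langle A(x,\nabla u),\nabla u\rangle = \langle A(x,\nabla u)-A(x,0),\nabla u - 0\rangle$ (legitimate because $A(x,0)=0$, which follows from \eqref{cond:A1}), condition \eqref{cond:A2} gives $\langle A(x,\nabla u),\nabla u\rangle \ge \Lambda_2 |\nabla u|^p$ a.e. Rearranging the identity above,
\begin{align*}
\Lambda_2 \int_\Omega |\nabla u|^p\, dx \le \int_\Omega \langle A(x,\nabla u),\nabla g\rangle\, dx + \int_\Omega \langle f,\nabla u\rangle\, dx - \int_\Omega\langle f,\nabla g\rangle\, dx.
\end{align*}
Then I would bound each term on the right: the first by the growth condition \eqref{cond:A1} as $\Lambda_1\int_\Omega |\nabla u|^{p-1}|\nabla g|\,dx$, and the remaining two by Cauchy–Schwarz pointwise followed by Hölder with exponents $p$ and $p' = p/(p-1)$. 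This produces terms of the form $\|\nabla u\|_{L^p}^{p-1}\|\nabla g\|_{L^p}$, $\|f\|_{L^{p'}}\|\nabla u\|_{L^p}$, and $\|f\|_{L^{p'}}\|\nabla g\|_{L^p}$.

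The final step is to absorb the powers of $\|\nabla u\|_{L^p}$ into the left-hand side. For the term $\|\nabla u\|_{L^p}^{p-1}\|\nabla g\|_{L^p}$ one applies Young's inequality in the form $ab \le \tau a^{p'} \cdot \tfrac{1}{p'} + \tau^{1-p}\tfrac{1}{p} b^p$ with $a = \|\nabla u\|_{L^p}^{p-1}$, $b = \|\nabla g\|_{L^p}$, noting $a^{p'} = \|\nabla u\|_{L^p}^p$; for $\|f\|_{L^{p'}}\|\nabla u\|_{L^p}$ one uses the analogous Young inequality with conjugate exponents $p', p$. Choosing $\tau$ small enough (depending only on $p,\Lambda_1,\Lambda_2$) absorbs the $\|\nabla u\|_{L^p}^p$ contributions into $\Lambda_2\int_\Omega|\nabla u|^p\,dx$, leaving a bound $c\int_\Omega|\nabla u|^p\,dx \le C\int_\Omega(|f|^{p'} + |\nabla g|^p)\,dx$ with $C = C(n,p,\Lambda_1,\Lambda_2)$, which is \eqref{eq:prop1}. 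I expect no serious obstacle here; the only point requiring mild care is the bookkeeping of the Young-inequality constants to ensure the final constant depends only on the stated parameters, and justifying that $u - g$ is an admissible test function, which is exactly the definition of weak solution with boundary datum $g$.
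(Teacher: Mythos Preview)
Your proposal is correct and follows essentially the same approach as the paper: test with $u-g$, use the growth and monotonicity conditions \eqref{cond:A1}--\eqref{cond:A2} to obtain $\int_\Omega |\nabla u|^p\,dx \le C\big(\int_\Omega |\nabla u|^{p-1}|\nabla g| + \int_\Omega |f||\nabla u| + \int_\Omega |f||\nabla g|\big)$, and finish with H\"older and Young. Your explicit justification of the coercivity step via $A(x,0)=0$ and monotonicity with $\eta=0$ is a slight elaboration of what the paper leaves implicit, but the argument is the same.
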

\begin{proof}
By using $u - g$ as a test function of equation~\eqref{eq:diveq}, we obtain
\begin{equation*}
\int_{\Omega} \langle A(x,\nabla u), \nabla u \rangle dx = \int_{\Omega} \langle A(x,\nabla u), \nabla g \rangle dx + \int_{\Omega} \langle f, \nabla (u - g)\rangle dx.
\end{equation*}
Taking into account both conditions of operator $A$ in~\eqref{cond:A1} and~\eqref{cond:A2}, it deduces that
\begin{equation*}
\int_{\Omega} |\nabla u|^p dx \le C \left( \int_{\Omega} |\nabla u|^{p-1} |\nabla g| dx + \int_{\Omega} |f| |\nabla u| dx + \int_{\Omega} |f| |\nabla g| dx \right).
\end{equation*}
Finally, we may easily obtain~\eqref{eq:prop1} by combining the H{\"o}lder and Young's inequalities from this estimate.
\end{proof}
\subsection{Interior estimates} \label{subsec:int}
\begin{theorem}\label{theo:int-comp}
Let $u \in W^{1,p}(\Omega)$ be a solution to~\eqref{eq:diveq}. Assume that $x_0 \in \Omega$ and $R>0$ such that $B_{4R}(x_0) \subset \Omega$. Assume moreover that $A$ satisfies a $(\delta,R_0)$-BMO condition for some constants $\delta\in(0,1)$. Then there exists $v \in L^{\infty}(B_{R}(x_0))\cap W^{1,p}(B_{2R}(x_0))$ and $m>0$ such that two following inequalities
\begin{align}\label{eq:theoIc}
\|\nabla v\|^p_{L^{\infty}(B_{R}(x_0))} & \le C\fint_{B_{4R}(x_0)} |\nabla u|^p  dx + C  \fint_{B_{4R}(x_0)}  |f|^{\frac{p}{p-1}} + |\nabla g|^p dx,
\end{align}
and
\begin{align}\label{eq:theoId}
\fint_{B_{2R}(x_0)} |\nabla u - \nabla v|^pdx \le C(\delta^m+\gamma) \fint_{B_{4R}(x_0)} |\nabla u|^p dx  + C_{\gamma}   \fint_{B_{4R}(x_0)} |f|^{\frac{p}{p-1}} + |\nabla g|^p dx,
\end{align} 
hold for every $\gamma \in (0,1)$.
\end{theorem}
\begin{proof}
Let us fix a point $x_0 \in \Omega$ and $R>0$ such that $B_{4R}(x_0) \subset \Omega$. For simplicity of notation let us set $B_{4R}:= B_{4R}(x_0)$, $B_{2R}:= B_{2R}(x_0)$ and $B_{R}:= B_{R}(x_0)$. The proof will be achieved in two  steps that corresponding to two level comparisons with homogeneous problems.\\ 

\textit{Step 1:} We consider the unique solution $w$ to the homogeneous problem
\begin{equation}\label{eq:I1}
\begin{cases} \mathrm{div} A(x,\nabla w) & = \ 0, \qquad \, \mbox{ in } B_{4R},\\ 
\hspace{1.2cm} w & = \ u - g. \ \, \mbox{ on } \partial B_{4R}.\end{cases}
\end{equation}
Let us first prove that the following comparison estimate 
\begin{align}\label{eq:lem1b}
\fint_{B_{4R}} |\nabla u - \nabla w|^p dx  & \le   \gamma \fint_{B_{4R}} |\nabla u|^{p} dx  + C(\gamma) \fint_{B_{4R}}  |f|^{\frac{p}{p-1}} + |\nabla g|^p dx
\end{align}
holds for every $\gamma \in (0,1)$. By choosing $u - w - \overline{g}$ as a test function of equations~\eqref{eq:diveq} and~\eqref{eq:I1}, where $\overline{g} = g$ in $\overline{B_{4R}}$, one obtains that
\begin{align}\label{eq:L1I1}
& \int_{B_{4R}} \langle \left(A(x,\nabla u) - A(x, \nabla w)\right), \nabla (u - w) \rangle dx  = \int_{B_{4R}} \langle \left(A(x,\nabla u) - A(x, \nabla w)\right), \nabla g \rangle dx \notag \\ & \hspace{5cm} + \int_{B_{4R}} \langle f, \nabla (u - w) \rangle dx  - \int_{B_{4R}} \langle f, \nabla g \rangle dx.
\end{align}
Moreover, we notice that since two conditions~\eqref{cond:A1} and~\eqref{cond:A2} of $A$, it can be deduced from~\eqref{eq:L1I1} that there exists a positive constant $C$ depending on $\Lambda_1, \Lambda_2$ such that
\begin{align}\label{eq:L1I2-a}
& \int_{B_{4R}} \left(|\nabla u|^2 + |\nabla w|^2 \right)^{\frac{p-2}{2}}|\nabla u - \nabla w|^2  dx \le C \left(\int_{B_{4R}} |\nabla u|^{p-1} |\nabla g| dx \right. \notag \\ & \hspace{2cm}  \left. + \int_{B_{4R}} |\nabla w |^{p-1} |\nabla g| dx   + \int_{B_{4R}} |f| |\nabla u - \nabla w| dx + \int_{B_{4R}} |f|  |\nabla g| dx\right).
\end{align}
The fundamental inequality
\begin{align*}
|\nabla w|^{p-1} & \le \left(|\nabla u| + |\nabla u - \nabla w|\right)^{p-1} \le 2^p \left(|\nabla u|^{p-1} + |\nabla u - \nabla w|^{p-1}\right),
\end{align*}
yields from~\eqref{eq:L1I2-a} that 
\begin{align}\label{eq:L10}
& \int_{B_{4R}} \left(|\nabla u|^2 + |\nabla w|^2 \right)^{\frac{p-2}{2}}|\nabla u - \nabla w|^2  dx \le  C  \left(\int_{B_{4R}} |\nabla u|^{p-1} |\nabla g| dx  \right. \notag \\  
 & \hspace{2cm} \left. + \int_{B_{4R}} |\nabla u - \nabla w |^{p-1} |\nabla g| dx + \int_{B_{4R}} |f| |\nabla u - \nabla w| dx + \int_{B_{4R}} |f|  |\nabla g| dx\right).
\end{align}
In order to estimate the right-hand side of~\eqref{eq:L1I2-a}, we are allowed to apply the  inequality~\eqref{eq:estHY} which is a consequence of Young's inequality. More precisely, it is known that for any $\epsilon \in (0,1)$, there exists $m(\epsilon) = m(p,\epsilon)>0$ such that
\begin{align}\label{eq:estHY}
|a|^{p-1}|b| \le \epsilon |a|^p + \epsilon^{1-p} |b|^p \ \mbox{ or } \ |ab| \le \epsilon |a|^{p} + \epsilon^{\frac{1}{1-p}} |b|^{\frac{p}{p-1}}.
\end{align}
It is straightforward to obtain these following inequalities:
\begin{align}\label{eq:estI0}
\int_{B_{4R}} |\nabla u|^{p-1} |\nabla g| dx  \le \epsilon \int_{B_{4R}} |\nabla u|^{p} dx + \epsilon^{1-p} \int_{B_{4R}}  |\nabla g|^p dx,
\end{align}
\begin{align}\label{eq:estI1}
\int_{B_{4R}} |\nabla u - \nabla w |^{p-1} |\nabla g| dx   \le \epsilon  \int_{B_{4R}} |\nabla u- \nabla w |^{p} dx + \epsilon^{1-p} \int_{B_{4R}} |\nabla g|^p dx,
\end{align}
\begin{align}\label{eq:estI2}
\int_{B_{4R}} |f| |\nabla u - \nabla w| dx   \le  \epsilon \int_{B_{4R}} |\nabla u - \nabla w|^p dx + \epsilon^{\frac{1}{1-p}} \int_{B_{4R}} |f|^{\frac{p}{p-1}} dx,
\end{align}
and
\begin{align}\label{eq:estI3}
\int_{B_{4R}} |f|  |\nabla g| dx \le \int_{B_{4R}} |\nabla g|^p dx +  \int_{B_{4R}} |f|^{\frac{p}{p-1}} dx.
\end{align}
Substituting all estimates in~\eqref{eq:estI0}-\eqref{eq:estI3} into~\eqref{eq:L10}, one finds
\begin{align}\label{eq:L500}
& \int_{B_{4R}} \left(|\nabla u|^2 + |\nabla w|^2 \right)^{\frac{p-2}{2}}|\nabla u - \nabla w|^2  dx \le C\left( \epsilon \int_{B_{4R}} |\nabla u|^{p} dx + 2\epsilon \int_{B_{4R}} |\nabla u - \nabla w|^p dx \right.  \notag \\  
 & \hspace{6cm} \left. + 3\epsilon^{-m} \int_{B_{4R}} |f|^{\frac{p}{p-1}} + |\nabla g|^p dx\right).
\end{align}
where $m = \max\left\{\frac{1}{p-1};p-1\right\}$. The first case, when $p \ge 2$, let us apply the fundamental inequality
\begin{align*}
|\nabla u - \nabla w|^p \le C\left(|\nabla u|^2 + |\nabla w|^2 \right)^{\frac{p-2}{2}}|\nabla u - \nabla w|^2, 
\end{align*}
and replace $4C\epsilon$ in~\eqref{eq:L500} by $\gamma \in (0,1)$ to deduce~\eqref{eq:lem1b}. In the other case when $1<p<2$, one may use the following decomposition
\begin{align}\label{fund-ineq-2}
|\nabla u - \nabla w|^p & = \left(|\nabla u|^2 + |\nabla w|^2 \right)^{\frac{p(2-p)}{4}}\left[\left(|\nabla u|^2 + |\nabla w|^2 \right)^{\frac{p-2}{2}}|\nabla u - \nabla w|^2\right]^{\frac{p}{2}} \notag \\
& = C\left(|\nabla u|^p + |\nabla u-\nabla w|^p \right)^{1-\frac{p}{2}}\left[\left(|\nabla u|^2 + |\nabla w|^2 \right)^{\frac{p-2}{2}}|\nabla u - \nabla w|^2\right]^{\frac{p}{2}} \notag \\
& \le \varepsilon \left(|\nabla u|^p + |\nabla u-\nabla w|^p\right) + C_{\varepsilon}\left(|\nabla u|^2 + |\nabla w|^2 \right)^{\frac{p-2}{2}}|\nabla u - \nabla w|^2, 
\end{align}
for every $\varepsilon >0$. In order to conclude~\eqref{eq:lem1b}, it allows us to choose suitable values of $\varepsilon$ and $\epsilon$ depending on $\gamma \in (0,1)$.\\

\textit{Step 2:} Let $v$ be the unique solution to the following problem 
\begin{equation}\label{eq:I2}
\begin{cases} \mathrm{div} \overline{A}_{B_{2R}}(\nabla v) & = \ 0, \ \ \mbox{ in } B_{2R},\\ 
\hspace{1.2cm} v & = \ w, \ \mbox{ on } \partial B_{2R}.\end{cases}
\end{equation}
The basic regularity for solution to~\eqref{eq:I2} gives us
\begin{align} \label{eq:theoId_2}
\|\nabla v\|^p_{L^{\infty}(B_{R})} &\le C \fint_{B_{2R}} |\nabla v|^p dx   \le C \fint_{B_{2R}} |\nabla w|^p dx. 
\end{align}
Thanks to~\eqref{eq:lem1b}, for every $\gamma\in (0,1)$ one has the following estimate
\begin{align} \label{eq:theoId_1}
\fint_{B_{2R}} |\nabla w|^p dx & \le  C\fint_{B_{4R}} |\nabla u|^p  dx +  C\fint_{B_{4R}} |\nabla u - \nabla w|^p dx \notag \\ 
& \le C(1+\gamma) \fint_{B_{4R}} |\nabla u|^p  dx +  C \left( \fint_{B_{4R}}  |f|^{\frac{p}{p-1}} + |\nabla g|^p dx  \right),
\end{align}
which implies to~\eqref{eq:theoIc} from~\eqref{eq:theoId_2}. On the other hand, by taking $v - w \in W_0^{1,p}(B_{2R})$ as the test function to~\eqref{eq:I2} and~\eqref{eq:I1}, there holds
\begin{align*}
& \fint_{B_{2R}} \langle (\overline{A}_{B_{2R}}(\nabla w) - \overline{A}_{B_{2R}}(\nabla v)), (\nabla v - \nabla w) \rangle dx \\
& \hspace{4cm} = \fint_{B_{2R}} \langle (\overline{A}_{B_{2R}}(\nabla w) - A(x,\nabla w)), (\nabla v - \nabla w) \rangle dx.
\end{align*}
Thanks to both~\eqref{cond:A2} and Young inequality, for every $\varepsilon \in (0,1)$ one gets that
\begin{align}\label{eq:B100}
& \fint_{B_{2R}} \left(|\nabla v|^2 + |\nabla w|^2 \right)^{\frac{p-2}{2}}|\nabla v - \nabla w|^2  dx \le C \fint_{B_{2R}} \theta(x) |\nabla w|^{p-1} |\nabla v - \nabla w| dx \notag \\
& \hspace{3cm} \le \varepsilon \fint_{B_{2R}} |\nabla v - \nabla w|^p dx + C_{\varepsilon}  \fint_{B_{2R}} [\theta(x)]^{\frac{p}{p-1}} |\nabla w|^{p} dx, 
\end{align}
where the function $\theta$ is defined by
\begin{align}\notag
\theta(x) = \sup_{\xi \in \mathbb{R}^n \setminus \{0\}} \frac{\left|A(x,\xi)-\overline{A}_{B_{2R}}(\xi)\right|}{|\xi|^{p-1}}.
\end{align}
It is well-known that the higher integrability of~\eqref{eq:I1}, there exists $P_0>p$ such that
\begin{align}\notag
\left(\fint_{B_{2R}} |\nabla w|^{P_0} dx\right)^{\frac{1}{P_0}} \le C \left(\fint_{B_{4R}} |\nabla w|^{p} dx\right)^{\frac{1}{p}}. 
\end{align}
Using this inequality and the fact that $\theta$ is bounded by $2\Lambda_1$ under assumption~\eqref{cond:A1}, it follows that
\begin{align}\label{eq:B200}
\fint_{B_{2R}} [\theta(x)]^{\frac{p}{p-1}} |\nabla w|^{p} dx & \le \left(\fint_{B_{2R}} |[\theta(x)]^{\frac{p}{p-1}}|^{\frac{P_0}{P_0-p}} dx\right)^{\frac{P_0-p}{P_0}} \left(\fint_{B_{2R}} |\nabla w|^{P_0} dx\right)^{\frac{p}{P_0}} \notag \\
& \le \left(\fint_{B_{2R}} \theta(x) dx\right)^{\frac{P_0-p}{P_0}} \left(\fint_{B_{2R}} |\nabla w|^{P_0} dx\right)^{\frac{p}{P_0}} \notag \\
& \le C \delta^m \fint_{B_{4R}} |\nabla w|^{p} dx, 
\end{align}
where $m = \frac{P_0-p}{P_0}$. Here we use the small-BMO condition $[A]_{R_0} \le \delta$ and apply to the last estimate in~\eqref{eq:B200}. Similar to the previous proof, from~\eqref{eq:B100} and~\eqref{eq:B200} one can find a positive constant $\tilde{C}=\tilde{C}(n,p,\Lambda_1,\Lambda_2)$ such that 
\begin{equation}\label{eq:lem2a}
\fint_{B_{2R}} |\nabla w - \nabla v|^p dx \le \tilde{C} \delta^m \fint_{B_{4R}} |\nabla w|^p dx.
\end{equation}
We refer the reader to~\cite{MP11} for the proof of~\eqref{eq:lem2a}. This estimate gives us
\begin{align*}
\fint_{B_{2R}} |\nabla u - \nabla v|^p dx &\le C\fint_{B_{2R}} |\nabla u - \nabla w|^p dx + C \fint_{B_{2R}} |\nabla w - \nabla v|^p dx  \\
& \le C\fint_{B_{4R}} |\nabla u - \nabla w|^p dx + \tilde{C}\delta^m \fint_{B_{4R}} |\nabla w|^p dx.
\end{align*}
This establishes our conclusion~\eqref{eq:theoId} by using the results from both~\eqref{eq:lem1b} and~\eqref{eq:theoId_1}.
\end{proof}

\subsection{Boundary estimates}\label{subsec:boundary}

We here treat the boundary case, where the boundary $\partial\Omega$ satisfies the local flatness in sense of Reifenberg, the proof can be done in the same argument as that of interior case. The precise statement can be found in Theorem \ref{theo:boundary-comp} as follows.
\begin{theorem}\label{theo:boundary-comp}
Let $u \in W^{1,p}(\Omega)$ be a solution to~\eqref{eq:diveq} and $\Omega$ be a $(\delta,R_0)$-Reifenberg flat domain with $\delta \in (0,1/2]$. Assume that $x_0 \in \partial\Omega$, $0<R<R_0/4$ and $A$ satisfies a small-BMO condition $[A]_{R_0} \le \delta$. Then there exists $\tilde{v} \in L^{\infty}(\Omega_{R/9})\cap W^{1,p}(\Omega_{R/9})$ and $m>0$ such that two following inequalities
\begin{align}\label{eq-theo:boundary-1}
\|\nabla \tilde{v}\|^p_{L^{\infty}(\Omega_{R/9})} & \le C\fint_{\Omega_{4R}} |\nabla u|^p  dx + C  \fint_{\Omega_{4R}}  |f|^{\frac{p}{p-1}} + |\nabla g|^p dx,
\end{align}
and
\begin{align}\label{eq-theo:boundary-2}
\fint_{\Omega_{R/9}} |\nabla u - \nabla \tilde{v}|^pdx \le C(\delta^m+\gamma) \fint_{\Omega_{4R}} |\nabla u|^p dx  + C_{\gamma} \fint_{\Omega_{4R}} |f|^{\frac{p}{p-1}} + |\nabla g|^p dx,
\end{align} 
hold for every $\gamma \in (0,1)$. Here we denote $\Omega_{\varrho} = B_{\varrho}(x_0)\cap \Omega$ for every $\varrho>0$.
\end{theorem}
\begin{proof}
Let $w$ be the unique solution to the following problem
 \begin{equation}\label{eq:B1}
\begin{cases} \mathrm{div} A(x,\nabla w) & = \ 0, \hspace{.85cm} \mbox{ in } \Omega_{4R},\\ 
\hspace{1.2cm} w & = \ u - g, \ \mbox{ on } \partial \Omega_{4R}.\end{cases}
\end{equation}
By the same technique as in the proof of~\eqref{eq:lem1b} in Theorem~\ref{theo:int-comp}, for every $\gamma \in (0,1)$ one obtains that
\begin{align}\label{est_lem:u-w500}
\fint_{\Omega_{4R}} |\nabla u - \nabla w|^p dx  & \le \gamma \fint_{\Omega_{4R}} |\nabla u|^{p} dx + C \fint_{\Omega_{4R}} |f|^{\frac{p}{p-1}} + |\nabla g|^p dx,
\end{align} 
which deduces to
\begin{align} \label{est_lem:w}
\fint_{\Omega_{4R}} |\nabla w|^p dx & \le C \fint_{\Omega_{4R}} |\nabla u|^p dx + C \fint_{\Omega_{4R}} |\nabla u - \nabla w|^p dx\notag \\ 
& \le  C \fint_{\Omega_{4R}} |\nabla u|^p dx + C \left(\fint_{\Omega_{4R}}  |f|^{\frac{p}{p-1}} + |\nabla g|^p dx  \right).
\end{align}
On the other hand, since $\Omega$ is a $(\delta,R_0)$-Reifenberg flat domain with $\delta \in (0,1/2]$, there exists a coordinate system $\{y_1, y_2, ..., y_n\}$ with the origin $0 \in \Omega$ such that in this coordinate system $x_0 = - r\delta/(1-\delta)y_n$ and
\begin{align}\label{cond:B}
B_{r}^+(0) \subset B_r(0) \cap \Omega \subset B_r(0) \cap \{y_n> -2 r\delta/(1-\delta)\} \subset  B_r(0) \cap \{y_n > -4r\delta\},
\end{align}
where  $r = R(1-\delta)$ and $B_{r}^+(0) = B_r(0) \cap \{y_n > 0\}$. Therefore, one can find $\delta_0>0$ small enough such that for all $\delta \in (0,\delta_0)$ then condition~\eqref{cond:B} and the following estimate are valid
\begin{align}\label{cond:B2}
B_{R/9}(x_0) \subset B_{r/8}(0) \subset B_{r/4}(0) \subset B_r(0) \subset B_{4r}(x_0) \subset B_{4R}(x_0).
\end{align}
Let $v$ be the unique solution to the equation
\begin{equation}\label{eq:B3}
\begin{cases} \mathrm{div} \overline{A}_{B_r(0)}(\nabla v) & = \ 0, \ \, \mbox{ in } B_r(0) \cap \Omega,\\ 
\hspace{1.2cm} v & = \ w, \ \mbox{ on } \partial (B_r(0) \cap \Omega).\end{cases}
\end{equation} 
Similar to~\eqref{eq:lem2a}, one may obtain from~\eqref{cond:B2} the following estimate
\begin{align}\label{est_lem:v-w500}
\fint_{B_{r/8}(0)\cap \Omega} |\nabla v - \nabla w|^pdx \le C \fint_{B_{r}(0)\cap \Omega} |\nabla v - \nabla w|^pdx \le C \delta^m \fint_{\Omega_{4R}} |\nabla w|^pdx.
\end{align}
Because of the fact that $L^{\infty}$-norm of $\nabla v$ up to the boundary may not exist if $\partial \Omega$ is not regular enough, we consider $\tilde{v}$ as the weak solution to an another problem
\begin{equation}\label{eq:B4}
\begin{cases} \mathrm{div} \overline{A}_{B_r(0)}(\nabla \tilde{v}) & = \ 0, \ \mbox{ in } B_r^{+}(0),\\ 
\hspace{1.2cm} \tilde{v} & = \ 0, \ \mbox{ on }  B_r(0)\cap \{y_n=0\}.\end{cases}
\end{equation}
By the same technique as the proof of Theorem~\ref{theo:int-comp}, one can find $m>0$ such that for every $\delta \in (0,\delta_0)$, the weak solution $\tilde{v}$ of~\eqref{eq:B4} satisfies the following estimates
\begin{align}\label{est_lem:B5a}
\|\nabla \tilde{v}\|^p_{L^{\infty}(B_{r/4}(0))} &\le C \fint_{B_r(0)}|\nabla v|^p dx, 
\end{align}
and
\begin{align} \label{est_lem:B5b}
\fint_{B_{r/8}(0)} |\nabla v - \nabla U|^p dx & \le C\delta^m \fint_{B_r(0)} |\nabla v|^pdx.
\end{align}
Combining~\eqref{est_lem:B5a} and the regularity of solution $v$ to~\eqref{eq:B3} with notice~\eqref{cond:B2}, one gets that
\begin{align*}
\|\nabla \tilde{v}\|^p_{L^{\infty}(\Omega_{R/9})}  \le C \fint_{B_{r}(0)} |\nabla v|^p dx \le C \fint_{B_{r}(0)} |\nabla w|^p dx  \le C \fint_{\Omega_{4R}} |\nabla w|^p dx,
\end{align*}
which follows to~\eqref{eq-theo:boundary-1} from~\eqref{est_lem:w}. Moreover, ones also obtains from~\eqref{cond:B2} that
\begin{align}\label{est_lem:final}
\fint_{\Omega_{R/9}} |\nabla u - \nabla \tilde{v}|^pdx & \le C \fint_{B_{r/8}(0)} |\nabla u - \nabla \tilde{v}|^pdx \notag \\ 
& \le C \fint_{B_{r/8}(0)} |\nabla u - \nabla w|^pdx + C \fint_{B_{r/8}(0)} |\nabla w - \nabla v|^pdx \notag \\  
& \qquad + C \fint_{B_{r/8}(0)} |\nabla v - \nabla \tilde{v}|^pdx.
\end{align}
In order to obtain~\eqref{eq-theo:boundary-2}, one only takes into account all estimates in~\eqref{est_lem:u-w500}, \eqref{est_lem:w}, \eqref{est_lem:v-w500} and ~\eqref{est_lem:B5b} into the right-hand side of~\eqref{est_lem:final}. 
\end{proof}

\section{Gradient estimates}\label{sec:main}

We devote this section to proving our main results in theorems \ref{theo:regularity} and \ref{theo:I-alpha}. Before providing the proofs of them, let us state and prove the theorem of `good-$\lambda$' inequality (Theorem \ref{theo:goodlambda}), which plays a significant role in our main proofs. 

\subsection{Good-$\lambda$ inequality}
\begin{theorem}\label{theo:goodlambda}
Let $p>1$, $\alpha \in [0,n)$, $\omega \in \mathcal{A}_\infty$, $f \in L^{\frac{p}{p-1}}(\Omega; \mathbb{R}^n)$, $g \in W^{1,p}(\Omega;\mathbb{R})$ and $u$ be a weak solution to \eqref{eq:diveq}. For any $\varepsilon>0$, $\lambda>0$ and $R_0>0$, there exist some constants $\delta = \delta(n,\varepsilon,[\omega]_{\mathcal{A}_\infty})$, ${\sigma} = \sigma(n,p,\alpha)$ and $\kappa = \kappa(n,p,\alpha,\varepsilon, [\omega]_{\mathcal{A}_\infty}, \mathrm{diam}(\Omega)/R_0)$ such that if $\Omega$ is a $(\delta,R_0)$-Reifenberg flat domain satisfying $[A]_{R_0} \le \delta$, then
\begin{align}\label{eq:lambdabound}
&\omega\left(\{\mathbf{M}\mathbf{M}_{\alpha}(|\nabla u|^{p})>{\sigma}\lambda, \mathbf{M}_{\alpha}(|f|^{\frac{p}{p-1}}+|\nabla g|^p) \le \kappa\lambda \}\cap \Omega \right) \notag \\ & \hspace{6cm}\leq C \varepsilon \omega\left(\{\mathbf{M} \mathbf{M}_{\alpha}(|\nabla u|^{p})> \lambda\}\cap \Omega \right).
\end{align}
Here, we note that the constant $C$ depends only on $n,p,\alpha, \varepsilon,\mathrm{diam}(\Omega)/R_0,[\omega]_{\mathcal{A}_\infty}$.
\end{theorem}

To obtain the proof of Theorem \ref{theo:goodlambda}, we require the following Lemma \ref{lem:wReig}, the very important ingredient. The utility of this lemma normally relies on the Vitali type covering lemma (to cover a set $G$ by a countable family of pairwise disjoint closed balls) and the Lebesgue differentiation theorem (to control the size of the set on which the integral average can be large in terms of the $L^1$-norm), that are widespread in harmonic analysis. It refers to \cite{Vitali08}, the famous result in measure theory of Euclidean spaces, noticed by Vitali and later various literature concerning its modifications and applications \cite{CP1998,Wang2}.  The use of Vitali's covering lemma combining with maximal function techniques was first introduced by Duzaar and  Mingione in \cite{Duzamin2, 55DuzaMing}. Further, several references \cite{SSB4,BW1_1,MP11} are also worth to read in solution estimates for elliptic and parabolic equations/systems.


\begin{lemma}[Covering lemma]\label{lem:wReig}
Let $\omega \in \mathcal{A}_\infty$ and $\Omega$ be a $(\delta,R_0)$-Reifenberg flat domain for some $\delta \in (0,1)$. Suppose that the sequence of balls $\{B_r(z_i)\}_{i=1}^N$ with center $z_i \in \bar{\Omega}$ and radius $r \le R_0/10$ covers $\Omega$. Let $V \subset W \subset \Omega$ be measurable sets for which there exists $0<\varepsilon<1$ such that:
\begin{itemize}
\item[(i)] $\omega(V) \le \varepsilon \omega(B_r(z_i))$ for all $i=1,2,...,N$;
\item[(ii)] $\omega(V \cap B_\rho(x)) \ge \varepsilon\omega(B_\rho(x)) \Rightarrow B_\rho(x) \cap \Omega \subset W$, for all $x \in \Omega, \rho \in (0,2r]$.
\end{itemize}
Then, there exists a constant $C = C(n,[\omega]_{\mathcal{A}_\infty})$ such that $\omega(V) \le C\varepsilon \omega(W)$.
\end{lemma}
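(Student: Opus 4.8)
The plan is to run a weighted Vitali-type stopping-time argument with respect to the measure $\mu := \omega\,dx$ (so that $\mu(E)=\omega(E)$); since $\omega\in\mathcal{A}_\infty$, $\mu$ is a doubling measure on $\mathbb{R}^n$ with doubling constant $C_D=C_D(n,[\omega]_{\mathcal{A}_\infty})$, so the Lebesgue differentiation theorem and the Vitali ($5r$-)covering lemma are available for $\mu$. We may assume $\mu(V)>0$. First I would attach to $\mu$-a.e.\ $x\in V$ a \emph{stopping radius}: by Lebesgue differentiation $\lim_{\rho\to0^+}\mu(V\cap B_\rho(x))/\mu(B_\rho(x))=1>\varepsilon$, while on the other hand $x\in\Omega=\bigcup_i B_r(z_i)$, so $x\in B_r(z_{i})$ for some $i$, hence $B_r(z_{i})\subset B_{2r}(x)$ and, by hypothesis~(i), $\mu(V\cap B_{2r}(x))\le\mu(V)\le\varepsilon\,\mu(B_r(z_{i}))\le\varepsilon\,\mu(B_{2r}(x))$. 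Since $\rho\mapsto\mu(V\cap B_\rho(x))$ and $\rho\mapsto\mu(B_\rho(x))$ are continuous (balls have $\mu$-null boundaries) and the denominator is positive, the set $\{\rho\in(0,2r]:\mu(V\cap B_\rho(x))\ge\varepsilon\,\mu(B_\rho(x))\}$ is nonempty; I let $\rho_x$ be its supremum. Continuity then forces the equality $\mu(V\cap B_{\rho_x}(x))=\varepsilon\,\mu(B_{\rho_x}(x))$, while $\mu(V\cap B_\rho(x))<\varepsilon\,\mu(B_\rho(x))$ for all $\rho\in(\rho_x,2r]$.

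Next, the family $\{B_{\rho_x}(x)\}_{x\in V'}$, where $V'\subset V$ is the full-$\mu$-measure set on which the previous step works, covers $V'$ and has radii $\le 2r\le R_0/5$; by the Vitali type covering lemma I extract an at most countable pairwise disjoint subfamily $\{B_j\}_j:=\{B_{\rho_{j}}(x_{j})\}_j$ with $V'\subset\bigcup_j 5B_j$, where $5B_j:=B_{5\rho_{j}}(x_{j})$. Two features of the construction will be used. \emph{(a) Each $B_j$ is essentially inside $W$.} Since $\mu(V\cap B_j)=\varepsilon\,\mu(B_j)$, in particular $\mu(V\cap B_j)\ge\varepsilon\,\mu(B_j)$ with $\rho_j\le 2r$, so hypothesis~(ii) gives $B_j\cap\Omega\subset W$; moreover, as $x_j\in\bar\Omega$, $\rho_j\le R_0$ and $\delta$ is small, the measure-density property of $(\delta,R_0)$-Reifenberg flat domains yields $|B_j\cap\Omega|\ge c(n)|B_j|$, and then the well-known quantitative two-sided comparison between $\mu=\omega\,dx$ and Lebesgue measure for $\mathcal{A}_\infty$ weights gives $\mu(B_j\cap\Omega)\ge c_1\,\mu(B_j)$ with $c_1=c_1(n,[\omega]_{\mathcal{A}_\infty})$. \emph{(b) $V$ has small $\mu$-density in $5B_j$.} If $5\rho_j\le 2r$, then $5\rho_j\in(\rho_j,2r]$ and directly $\mu(V\cap 5B_j)<\varepsilon\,\mu(5B_j)\le C_D^{3}\varepsilon\,\mu(B_j)$ by doubling; if instead $5\rho_j>2r$, then $B_{2r}(x_j)\subset 5B_j$, and choosing $i$ with $x_j\in B_r(z_i)$ we get from~(i) and $B_r(z_i)\subset B_{2r}(x_j)$ that $\mu(V\cap 5B_j)\le\mu(V)\le\varepsilon\,\mu(B_r(z_i))\le\varepsilon\,\mu(B_{2r}(x_j))\le\varepsilon\,\mu(5B_j)\le C_D^{3}\varepsilon\,\mu(B_j)$. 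In either case $\mu(V\cap 5B_j)\le C_2\,\varepsilon\,\mu(B_j)$ with $C_2=C_2(n,[\omega]_{\mathcal{A}_\infty})$.

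Combining these with the pairwise disjointness of the $B_j$ (hence of the sets $B_j\cap\Omega\subset W$):
\[
\mu(V)=\mu(V')\le\sum_j\mu(V\cap 5B_j)\le C_2\,\varepsilon\sum_j\mu(B_j)\le\frac{C_2}{c_1}\,\varepsilon\sum_j\mu(B_j\cap\Omega)\le\frac{C_2}{c_1}\,\varepsilon\,\mu(W),
\]
which is exactly $\omega(V)\le C\varepsilon\,\omega(W)$ with $C=C_2/c_1=C(n,[\omega]_{\mathcal{A}_\infty})$.

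The step I expect to be the main obstacle is the transfer from Lebesgue-measure information to $\mu$-measure information, i.e.\ establishing $\mu(B_j\cap\Omega)\ge c_1\,\mu(B_j)$: this needs both the measure-density (thickness) estimate for $(\delta,R_0)$-Reifenberg flat domains for balls centered in $\bar\Omega$ of radius $\le R_0$ with $\delta$ small — including centers lying in $\Omega$ but close to $\partial\Omega$ — and the quantitative mutual absolute continuity between $\omega\,dx$ and Lebesgue measure characterizing $\mathcal{A}_\infty$. A secondary technical point is the bookkeeping in item~(b): it is precisely to prevent the $5r$-enlargement in the Vitali lemma from destroying the factor $\varepsilon$ that $\rho_x$ is chosen as the \emph{supremum} of admissible radii and that the covering balls $B_r(z_i)$ are invoked as an a priori upper barrier.
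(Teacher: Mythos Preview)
The paper does not actually supply a proof of this lemma; it states the result and, in the paragraph preceding it, only indicates that the proof ``relies on the Vitali type covering lemma \ldots and the Lebesgue differentiation theorem,'' referring the reader to \cite{CP1998,Wang2,SSB4,BW1_1,MP11} for variants. Your argument is therefore not being compared against a written proof but against that one-line sketch, and it matches it faithfully: a Calder\'on--Zygmund style stopping-time selection of radii $\rho_x$ via Lebesgue differentiation for the doubling measure $\mu=\omega\,dx$, followed by a Vitali $5r$-extraction and a summation. The chain
\[
\mu(V)\le\sum_j\mu(V\cap 5B_j)\le C_2\varepsilon\sum_j\mu(B_j)\le\frac{C_2}{c_1}\varepsilon\sum_j\mu(B_j\cap\Omega)\le\frac{C_2}{c_1}\varepsilon\,\mu(W)
\]
is correct, and your case split in (b) handling $5\rho_j>2r$ via hypothesis~(i) is exactly the standard device that keeps the factor $\varepsilon$ intact after the $5r$-dilation.

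Two small remarks. First, you write $x_j\in\bar\Omega$, but in fact $x_j\in V\subset\Omega$; this is harmless, and the measure-density estimate $|B_j\cap\Omega|\ge c(n)|B_j|$ for balls centered in $\Omega$ with radius $\le R_0$ follows by the usual dichotomy (either $B_j\subset\Omega$, or one passes to a nearby boundary point and uses the Reifenberg condition there, e.g.\ via the interior corkscrew property; see \cite{KT1997,LM1}). Second, the passage from $|B_j\cap\Omega|\ge c(n)|B_j|$ to $\mu(B_j\cap\Omega)\ge c_1\,\mu(B_j)$ is precisely the defining $\mathcal{A}_\infty$ inequality in Definition~\ref{def:Muck} applied in the reverse direction (i.e.\ to $B_j\setminus\Omega$), which is legitimate since the $\mathcal{A}_\infty$ condition is symmetric; you identify this correctly as the step where the Reifenberg hypothesis and the weight hypothesis interact.
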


\begin{proof}[Proof of Theorem \ref{theo:goodlambda}]
To do this we use a technique similar to the one in \cite{55QH4} for the problem with measure data, but here we confine ourselves to improve and modify to the proof of a version involving $\mathbf{M}_\alpha$ in the context of problem \eqref{eq:diveq}. Let us now consider two sets 
\begin{equation*}
V = \left\{\mathbf{M}\mathbf{M}_{\alpha}(|\nabla u|^{p})>{\sigma}\lambda, \mathbf{M}_{\alpha}(|f|^{\frac{p}{p-1}}+|\nabla g|^p) \le \kappa\lambda \right\}\cap \Omega, 
\end{equation*}
and
\begin{equation*}
W = \{\mathbf{M}\mathbf{M}_{\alpha}(|\nabla u|^{p})> \lambda\}\cap \Omega,
\end{equation*}
for any $\lambda>0$, where the constants ${\sigma}$, $\kappa$ in these sets will be specified later. Once having the Lemma \ref{lem:wReig} at hand, we outline the main steps that need to prove the sets $V$, $W$ satisfying all the assumptions, i.e., for any $\varepsilon>0$ there holds
\begin{enumerate}
\item[(i)] $\omega(V) \le \varepsilon \omega(B_{R_0}(0))$,
\item[(ii)] for all $x \in Q = B_{2D_0}(x_0)$, $r \in (0,2R_0]$, if $\omega(V\cap B_r(x)) \ge
C \varepsilon \omega(B_r(x))$ then $B_r(x) \cap Q \subset W$, where $D_0 = \mathrm{diam}(\Omega)$.
\end{enumerate}
We outline the main steps in the proof following above conditional lemma. More precisely, we first show that $(i)$ holds. Without loss of generality, we may assume that $V \neq \emptyset$, then there exists $x_1 \in \Omega$ such that
\begin{equation}\label{eq:E1}
\mathbf{M}_{\alpha}(|f|^{\frac{p}{p-1}} + |\nabla g|^p)(x_1) \le \kappa \lambda.
\end{equation}
By Lemma~\ref{lem:boundM}, the boundedness property of maximal function $\mathbf{M}$ from $L^1(\Omega)$ to $L^{1,\infty}(\Omega)$ gives
\begin{equation}\label{eq:E4}
|V| \le \left|\left\{\mathbf{M}\mathbf{M}_{\alpha}(|\nabla u|^{p})>{\sigma}\lambda \right\} \cap \Omega\right|  \le \frac{1}{{\sigma}\lambda} \int_{\Omega}\mathbf{M}_{\alpha}\left(|\nabla u|^p\right) dx.
\end{equation}
Applying the boundedness property of fractional maximal function $\mathbf{M}_{\alpha}$ in Lemma~\ref{lem:Malpha}, there holds
\begin{align*}
 \int_{\Omega} \mathbf{M}_\alpha(|\nabla u|^p) dx&= \int_{0}^{\infty} \left|\left\{x\in \Omega: \ \mathbf{M}_\alpha(|\nabla u|^p)(x)>\lambda\right\}\right| d\lambda\\
 & \leq  CD_0^n\lambda_0+\int_{\lambda_0}^{\infty} \left|\left\{x \in \Omega: \ \mathbf{M}_\alpha(|\nabla u|^p)(x)>\lambda\right\}\right| d\lambda\\
 &\leq C D_0^n\lambda_0+ C \left(\int_{\Omega}|\nabla u|^pdx\right)^{\frac{n}{n-\alpha}}\int_{\lambda_0}^{\infty}\lambda^{-\frac{n}{n-\alpha}}d\lambda\\
 &= CD_0^n\lambda_0+ C \left(\int_{\Omega}|\nabla u|^pdx\right)^{\frac{n}{n-\alpha}}\lambda_0^{-\frac{\alpha}{n-\alpha}},
\end{align*}
for any $\lambda_0>0$. In this formula, let us choose $\lambda_0=D_0^{-n+\alpha}\int_{\Omega}|\nabla u|^pdx$,
to follow that
\begin{align}\label{eq:E3}
\int_{\Omega} \mathbf{M}_{\alpha}\left(|\nabla u|^p\right) dx \le C_1 D_0^{\alpha} \int_{\Omega} |\nabla u|^p dx.
\end{align}
For the sake of readability, in some inequalities follow, the constants $C_i$ appearing might vary and must be indicated precisely. As such, this makes sense when we choose the value $\varepsilon>0$ in the statement of theorem at the end of proof depends only on a specific final constant. Plugging the validity of \eqref{eq:E3} to~\eqref{eq:E4} and~\eqref{eq:prop1} from Proposition~\ref{prop1} to infer that
\begin{align}\label{eq:E6}
|V|  &\le \frac{C_2D_0^{\alpha}}{{\sigma}\lambda} \int_{\Omega} |\nabla g|^p + |f|^{\frac{p}{p-1}} dx   \le  \frac{C_2D_0^{\alpha}}{{\sigma}\lambda} \int_{B_{D_0}(x_1)} |\nabla g|^p + |f|^{\frac{p}{p-1}} dx. 
\end{align}
Thanks to~\eqref{eq:E1}, it deduces from~\eqref{eq:E6} that
\begin{align}\label{eq:E7}
|V| \le \frac{C_2D_0^{n}}{{\sigma}\lambda} \mathbf{M}_{\alpha}(|\nabla g|^p + |f|^{\frac{p}{p-1}})(x_1)  \le \frac{C_2D_0^{n}}{{\sigma}\lambda} \kappa \lambda \le \frac{C_3 \kappa}{{\sigma}} |B_{R_0}(0)|.
\end{align}
In view of the definition of Muckenhoupt weight $\mathcal{A}_{\infty}$, we get by~\eqref{eq:E7} that
\begin{align*}
\omega(V) \le C_4 \left(\frac{|V|}{|B_{R_0}(0)|}\right)^{\nu} \omega\left(B_{R_0}(0)\right)  \le {C_5}{({\sigma})}^{-\nu} \kappa^\nu  \omega\left(B_{R_0}(0)\right) \le \varepsilon \omega\left(B_{R_0}(0)\right),
\end{align*}
where $\kappa$ is small enough satisfying ${C_5}{({\sigma})}^{-\nu} \kappa^\nu < \varepsilon$, we then immediately obtain $(i)$.\\

Let $x \in \Omega$, $r \in (0,2R_0]$ and $\lambda>0$, the remainder will be dedicated to the proof of $(ii)$, and the proof performed via a contradiction. Let us assume that $V \cap B_r(x) \neq \emptyset$ and $B_r(x) \cap \Omega \cap W^c \neq \emptyset$, i.e., there exist $x_2, x_3 \in B_r(x) \cap \Omega$ such that 
\begin{equation}\label{eq:M}
\mathbf{M}\mathbf{M}_{\alpha}(|\nabla u|^p)(x_2) \le \lambda \quad \mbox{ and } \quad \mathbf{M}_{\alpha}(|f|^{\frac{p}{p-1}}+|\nabla g|^p)(x_3) \le \kappa \lambda.
\end{equation}
We will show that 
\begin{equation}\label{eq:F1}
\omega(V \cap B_r(x)) < \varepsilon \omega(B_r(x)),
\end{equation}
which is a contradiction by Lemma~\ref{lem:wReig}. Indeed, for any $y \in B_r(x)$, it is easy to see that 
$$ B_{\rho}(y) \subset  B_{\rho+r}(x) \subset B_{\rho+2r}(x_2) \subset B_{3\rho}(x_2), \ \mbox{ for all } \ \rho \ge r,$$ 
which follows from \eqref{eq:M} that
\begin{align}\label{eq:T1}
\sup_{\rho\ge r}\fint_{B_{\rho}(y)}\mathbf{M}_{\alpha}(|\nabla u|^p) dx & \le 3^n  \sup_{\rho\ge r}\fint_{B_{3\rho}(x_2)}\mathbf{M}_{\alpha}(|\nabla u|^p)dx \le 3^n \mathbf{M}\mathbf{M}_{\alpha}(|\nabla u|^p)(x_2) \le 3^n \lambda.
\end{align}
Similarly, for any $y \in B_r(x)$, for all $0<\rho<r$ and  $z \in B_\rho(y)$, since $B_{\varrho}(z) \subset B_{4r}(x_2)$ for any $\varrho \ge r$, we also obtain that
\begin{align}\label{eq:T2}
\sup_{0<\rho<r} \fint_{B_\rho(y)} \left( \sup_{\varrho\ge r} \varrho^{\alpha} \fint_{B_{\varrho}(z)} |\nabla u|^p \right) dz \le 4^{n-\alpha} \mathbf{M}\mathbf{M}_{\alpha} (|\nabla u|^p)(x_2)  \le 4^{n-\alpha}\lambda.
\end{align}
Moreover, by the definitions of $\mathbf{M}$ and $\mathbf{M}_\alpha$, we can conclude from~\eqref{eq:T1} and~\eqref{eq:T2} that
\begin{align}\nonumber
\mathbf{M}\mathbf{M}_{\alpha}(|\nabla u|^p)(y) & \le \max\left\{\sup_{0<\rho<r} \fint_{B_\rho(y)} \left( \sup_{0<\varrho< r} \varrho^{\alpha} \fint_{B_{\varrho}(z)} |\nabla u|^p \right) dz; \right. \\ \nonumber
& \left. \sup_{0<\rho<r} \fint_{B_\rho(y)} \left( \sup_{\varrho\ge r} \varrho^{\alpha} \fint_{B_{\varrho}(z)} |\nabla u|^p \right) dz;  \quad \sup_{\rho\ge r}\fint_{B_{\rho}(y)}\mathbf{M}_{\alpha}(|\nabla u|^p) dx \right\} \\ \label{eq:T3}
& \le \max\left\{\sup_{0<\rho<r} \fint_{B_\rho(y)} \left( \sup_{0<\varrho< r} \varrho^{\alpha} \fint_{B_{\varrho}(z)} |\nabla u|^p \right) dz; \ 4^n \lambda \right\},
\end{align}
for all $y \in B_r(x)$. Thanks to~\eqref{eq:MrMr} in Lemma~\ref{lem:MrMr}, we obtain that from~\eqref{eq:T3}, it provides
\begin{align}\label{eq:T4}
\mathbf{M}\mathbf{M}_{\alpha}(|\nabla u|^p)(y)  \le \max\left\{\mathbf{M}^{2r}_{\alpha} (\chi_{B_{2r}(x)}|\nabla u|^p) (y); \ 4^n \lambda \right\}.
\end{align}
Hence if $\sigma$ is chosen satisfying ${\sigma} > 4^{n}$, then for any $\lambda>0$, by~\eqref{eq:T4} there holds
\begin{align}\label{eq:F2}
|V \cap B_r(x)| \le \left|\left\{\mathbf{M}^{2r}_{\alpha} (\chi_{B_{2r}(x)}|\nabla u|^p)  > {\sigma} \lambda\right\} \cap B_r(x) \cap \Omega\right|.
\end{align}
We remark that if $\overline{B}_{8r}(x) \subset \mathbb{R}^n\setminus \Omega$ then $V \cap B_r(x) = \emptyset$. So we need to consider two cases: $x$ is in the interior domain $B_{8r}(x) \subset \Omega$ and $x$ is near the boundary $B_{8r}(x) \cap \partial \Omega \neq \emptyset$. And the proof in each case consists in matching the comparison estimates of Lemmas and Theorems in the interior domain and on the boundary.\\

Let us now consider the first case $B_{8r}(x) \subset \Omega$. Thanks to Theorem~\ref{theo:int-comp}, under small-BMO condition of $A$ one can find $v \in L^{\infty}(B_{2r}(x))\cap W^{1,p}(B_{4r}(x))$ and $m>0$ such that
\begin{align}\label{est:Ia}
\|\nabla v\|^p_{L^{\infty}(B_{2r}(x))} \le C_6 \fint_{B_{8r}(x)} |\nabla u|^p dx + C_6 \fint_{B_{8r}(x)} |f|^{\frac{p}{p-1}} + |\nabla g|^p dx, 
\end{align}
and for every $\gamma \in (0,1)$ there holds
\begin{align}\label{est:Ib}
\fint_{B_{4r}(x)} |\nabla u - \nabla v|^pdx \le C_6 (\delta^m+\gamma) \fint_{B_{8r}(x)} |\nabla u|^p dx + C_6 \fint_{B_{8r}(x)} |f|^{\frac{p}{p-1}} + |\nabla g|^p dx.
\end{align}
Note that $B_{8r}(x) \subset B_{9r}(x_2) \cap B_{9r}(x_3)$, we obtain from~\eqref{eq:M} and~\eqref{est:Ia} the following estimates
\begin{align} \label{eq:M2r1}
\mathbf{M}^{2r}_{\alpha} (\chi_{B_{2r}(x)}|\nabla v|^p)(y) & \le (2r)^{\alpha}\|\nabla v\|^p_{L^{\infty}(B_{2r}(x))} \notag \\ 
 & \le  C_7 \left( r^{\alpha} \fint_{B_{9r}(x_2)} |\nabla u|^p dx + r^{\alpha} \fint_{B_{9r}(x_3)} |f|^{\frac{p}{p-1}} + |\nabla g|^p dx \right)  \notag \\
& \le C_8 \left(\mathbf{M}\mathbf{M}_{\alpha}(|\nabla u|^p)(x_2) + \mathbf{M}\mathbf{M}_{\alpha}(|f|^{\frac{p}{p-1}} + |\nabla g|^p)(x_3)\right) \notag \\ 
& \le C_9 \lambda,
\end{align}
and deduce from~\eqref{est:Ib} that
\begin{align} \label{eq:M2r2}
(4r)^{\alpha}\fint_{B_{4r}(x)} |\nabla u - \nabla v|^p dx & \le C_{10} (\delta^m+\gamma) r^{\alpha} \fint_{B_{9r}(x_2)} |\nabla u|^p dx \notag \\
& \qquad \qquad + C_{10}  r^{\alpha}  \fint_{B_{9r}(x_3)} |f|^{\frac{p}{p-1}} + |\nabla g|^p dx  \notag \\ 
& \le C_{10} \left(\delta^m + \gamma + \kappa \right)\lambda. 
\end{align}
It follows easily from~\eqref{eq:M2r1} that if ${\sigma} \ge \max\{4^{n},2^pC_9\}$, then
$$\left| \left\{ \mathbf{M}^{2r}_{\alpha}\left(\chi_{B_{2r}(x)}|\nabla v|^p\right)>2^{-p}{\sigma}\lambda\right\} \cap B_r(x)\right| = 0,$$
which implies from~\eqref{eq:F2} that
$$\left|V \cap B_r(x)\right| \le \left| \left\{ \mathbf{M}^{2r}_{\alpha}\left(\chi_{B_{2r}(x)}|\nabla u-\nabla v|^p\right)>2^{-p}{\sigma}\lambda\right\} \cap B_r(x)\right|.$$
Using again the bounded property of the fractional maximal function $\mathbf{M}_{\alpha}$ in Lemma~\ref{lem:Malpha}, we obtain from the above estimate and~\eqref{eq:M2r2} that
\begin{align}\label{eq:M2r3}
\left|V \cap B_r(x)\right| & \le \frac{C_{11}}{(2^{-p}{\sigma}\lambda)^{\frac{n}{n-\alpha}}} \left( \int_{B_{2r}(x)} |\nabla u - \nabla v|^p dx\right)^{\frac{n}{n-\alpha}} \notag \\ 
& \le \frac{C_{11}}{(2^{-p}{\sigma}\lambda)^{\frac{n}{n-\alpha}}} (4r)^{n} \left(  (4r)^{\alpha} \fint_{B_{4r}(x)} |\nabla u - \nabla v|^p dx\right)^{\frac{n}{n-\alpha}} \notag \\  
& \le C_{12}\left(\delta^m + \gamma + \kappa \right)^{\frac{n}{n-\alpha}} |B_r(x)|.
\end{align}
By the definition of the Muckenhoupt weight $\omega \in \mathcal{A}_{\infty}$, one may deduce~\eqref{eq:F1} from~\eqref{eq:M2r3}. That means
\begin{align*}
\omega(V \cap B_r(x)) & \le C \left(\frac{|V \cap B_r(x)|}{|B_r(x)|}\right)^\nu \omega(B_r(x)) \\
& \le C_{13}\left(\delta^m + \gamma + \kappa\right)^{\frac{n\nu}{n-\alpha}} \omega(B_r(x))  < \varepsilon \omega(B_r(x)),
\end{align*}
where $\delta$, $\kappa$ and $\gamma$ are small enough such that 
$$C_{13}\left(\delta^m + \gamma + \kappa\right)^{\frac{n\nu}{n-\alpha}} < \varepsilon.$$

Let us next consider the second case when $x$ is near the boundary $B_{8r}(x) \cap \partial \Omega \neq \emptyset$. Let $x_4 \in \partial \Omega$ such that $|x_4-x| = \mbox{dist}(x,\partial\Omega)$. We remark that
$$ B_{2r}(x) \subset B_{10r}(x_4) \subset B_{360r}(x_4) \subset B_{369r}(x) \subset B_{380r}(x_2) \cap B_{380r}(x_3).$$
Applying Theorem~\ref{theo:boundary-comp}, one can find $\tilde{v} \in L^{\infty}(B_{10r}(x_4) \cap \Omega)\cap W^{1,p}(B_{10r}(x_4) \cap \Omega)$ such that
\begin{align*}
\|\nabla \tilde{v}\|^p_{L^{\infty}(B_{10r}(x_4))} \le C_{14} \left( \fint_{B_{360r}(x_4)} |\nabla u|^p dx + \fint_{B_{360r}(x_4)} |f|^{\frac{p}{p-1}} +|\nabla g|^p dx \right),
\end{align*}
and for every $\gamma \in (0,1)$ there holds
\begin{align*}
\fint_{B_{10r}(x_4)} |\nabla u - \nabla \tilde{v}|^p dx \le C_{15} \left(\delta^m  + \gamma\right) \fint_{B_{360r}(x_4)} |\nabla u|^p dx + C_{15} \fint_{B_{360r}(x_4)} |f|^{\frac{p}{p-1}} + |\nabla g|^p dx. 
\end{align*}
As in the proof of the first case, thanks to~\eqref{eq:M} it follows from the above estimates that
\begin{align*}
(10r)^{\alpha}\|\nabla \tilde{v}\|^p_{L^{\infty}(B_{10r}(x_4))} & \le C_{16} \left(r^{\alpha} \fint_{B_{380r}(x_2)} |\nabla u|^p dx  + r^{\alpha}\fint_{B_{380r}(x_3)} |f|^{\frac{p}{p-1}} + |\nabla g|^p dx \right)\\
& \le C_{17}\left(1+\kappa\right)\lambda  \le C_{18}\lambda,
\end{align*}
and
\begin{align*}
(2r)^{\alpha}\fint_{B_{2r}(x)} |\nabla u - \nabla \tilde{v}|^p dx 
& \le C_{19} \left(\delta^m + \gamma \right) r^{\alpha}\fint_{B_{380r}(x_2)} |\nabla u|^p dx  \\
& \qquad +  C_{19} r^{\alpha}\fint_{B_{380r}(x_3)} |f|^{\frac{p}{p-1}} + |\nabla g|^p dx \\ 
& \le C_{20} \left(\delta^m + \gamma +  \kappa\right)\lambda.
\end{align*}
Therefore, for ${\sigma} \ge \max\{4^{n},2^pC_9, 2^pC_{18}\}$, we may conclude that
\begin{align*}
\left|V \cap B_r(x)\right| & \le \left|\left\{ \mathbf{M}^{2r}_{\alpha}\left(\chi_{B_{2r}(x)}|\nabla u-\nabla \tilde{v}|^p\right)>2^{-p}{\sigma}\lambda\right\}  \cap B_r(x)\right| \\
& \le \frac{C_{21}}{\left(2^{-p}{\sigma}\lambda\right)^{\frac{n}{n-\alpha}}} \left(\int_{B_{2r}(x)} |\nabla u - \nabla \tilde{v}|^pdx\right)^{\frac{n}{n-\alpha}} \\
& \le \frac{C_{21}}{\left(2^{-p}{\sigma}\lambda\right)^{\frac{n}{n-\alpha}}} (2r)^{n} \left((2r)^{\alpha}\fint_{B_{2r}(x)} |\nabla u - \nabla \tilde{v}|^pdx\right)^{\frac{n}{n-\alpha}} \\
& \le C_{22}\left(\delta^m + \gamma + \kappa \right)^{\frac{n}{n-\alpha}}|B_r(x)|.
\end{align*}
By the definition of Muckenhoupt weight $\omega \in \mathcal{A}_{\infty}$, this follows that
\begin{align*}
\omega(V \cap B_r(x)) & \le C \left(\frac{|V \cap B_r(x)|}{|B_r(x)|}\right)^\nu \omega(B_r(x)) \\
& \le C_{23}\left(\delta^m + \gamma + \kappa \right)^{\frac{n\nu}{n-\alpha}} \omega(B_r(x)).
\end{align*}
To complete the proof, all we need is to choose positive numbers $\gamma$, $\kappa$ and $\delta$ small enough such that $C_{23}\left(\delta^m + \gamma + \kappa\right)^{\frac{n\nu}{n-\alpha}} < \varepsilon$.
\end{proof}

\subsection{Riesz point-wise estimates}
\begin{proof}[Proof of Theorem~\ref{theo:regularity}]
By Theorem~\ref{theo:goodlambda}, for any $\varepsilon>0$ and $\lambda>0$, there exist some positive constants $\delta$, ${\sigma}$ and $\kappa$ such that if $\Omega$ is a $(\delta,R_0)$-Reifenberg flat domain satisfying $[A]_{R_0} \le \delta$ for some $R_0>0$ then
\begin{align}\label{eq:main-1}
\omega\left(V \right) \leq C \varepsilon \omega\left(W \right),
\end{align}
where $W = \{\mathbf{M}\mathbf{M}_{\alpha}(|\nabla u|^{p})> \lambda\}\cap \Omega$ and 
\begin{align*}
V = \left\{\mathbf{M}\mathbf{M}_{\alpha}(|\nabla u|^{p})>{\sigma}\lambda, \ \mathbf{M}_{\alpha}(|f|^{\frac{p}{p-1}}+|\nabla g|^p) \le \kappa\lambda \right\}\cap \Omega.
\end{align*}
We deduce from~\eqref{eq:main-1} that
\begin{align}\nonumber
{\omega}\left(\{{\mathbf{M}\mathbf{M}}_\alpha(|\nabla u|^p)>\sigma\lambda\}\right) &\le C\varepsilon {\omega}\left(\{{\mathbf{M}\mathbf{M}_{\alpha}}(|\nabla u|^p)>\lambda\}\cap\Omega \right)\\ \label{eq:main-2}
& \qquad \qquad + {\omega}\left(\{{\mathbf{M}}_\alpha(|f|^{\frac{p}{p-1}}+|\nabla u|^p)>\kappa\lambda\}\cap\Omega \right).
\end{align}
By the definition of the norm given in \eqref{eq:w-lorentz-a}, one has
\begin{align*}
\|{\mathbf{M}\mathbf{M}}_\alpha(|\nabla u|^p)\|^s_{L^{q,s}_{\omega}(\Omega)} = q \int_0^\infty{\lambda^s {\omega}\left(\{{\mathbf{M}\mathbf{M}}_\alpha(|\nabla u|^p)>\lambda\} \right)^{\frac{s}{q}}\frac{d\lambda}{\lambda}}.
\end{align*}
Changing the variable $\lambda$ to $\sigma\lambda$ in the integral on the right-hand side, we get that
\begin{align} \label{eq:main-3}
\|{\mathbf{M}\mathbf{M}}_\alpha(|\nabla u|^p)\|^s_{L^{q,s}_{\omega}(\Omega)} = \sigma^{s}q\int_0^\infty{\lambda^s{\omega}\left(\{{\mathbf{M}\mathbf{M}}_\alpha(|\nabla u|^p)>\sigma\lambda\} \right)^{\frac{s}{q}}\frac{d\lambda}{\lambda}}.
\end{align}
Thanks to~\eqref{eq:main-2}, it follows from~\eqref{eq:main-3} that
\begin{align*}
\|{\mathbf{M}\mathbf{M}}_\alpha(|\nabla u|^p)\|^s_{L^{q,s}_{\omega}(\Omega)} &\le \sigma^{s}(2C\varepsilon)^{\frac{s}{q}}q\int_0^\infty{\lambda^s{\omega}\left(\{{\mathbf{M}\mathbf{M}}_\alpha(|\nabla u|^p)>\lambda\}\cap\Omega \right)^{\frac{s}{q}}\frac{d\lambda}{\lambda}}\\
&~~~+ \sigma^{s}2^{\frac{s}{q}} q \int_0^\infty{\lambda^s{\omega}\left(\{{\mathbf{M}}_\alpha(|f|^{\frac{p}{p-1}}+|\nabla \sigma|^p)>\kappa\lambda\}\cap\Omega \right)^{\frac{s}{q}}\frac{d\lambda}{\lambda}}.
\end{align*}
Making the change of variables again in the second integral on right-hand side of above estimate, it is straightforward to obtain
\begin{align*}
\|{\mathbf{M}\mathbf{M}}_\alpha(|\nabla u|^p)\|^s_{L^{q,s}_{\omega}(\Omega)} &\le \sigma^{s}(2C\varepsilon)^{\frac{s}{q}}\|{\mathbf{M}\mathbf{M}}_\alpha\left(|\nabla u|^p \right)\|^s_{L^{q,s}_{\omega}(\Omega)}\\ & \qquad \qquad + \sigma^{s}2^{\frac{s}{q}} \kappa^{-s} \|{\mathbf{M}}_\alpha(|f|^{\frac{p}{p-1}}+|\nabla \sigma|^p)\|^s_{L^{q,s}_{\omega}(\Omega)},
\end{align*}
and achieve the proof of the desired estimate by taking $\sigma^{s}(2C\varepsilon)^{\frac{s}{q}} \le \frac{1}{2}$.
\end{proof}

To complete the last study of this section, we also prove Theorem \ref{theo:I-alpha} which relates a point-wise gradient bound for solutions to problem \eqref{eq:diveq} in terms of the Riesz potentials. 

\begin{proof}[Proof of Theorem~\ref{theo:I-alpha}]
Applying Theorem~\ref{theo:regularity}, we obtain at once that for any $\alpha \in [0,n)$, $0<q<\infty$ and $\omega \in \mathcal{A}_\infty$, there exists a positive constant $\delta$ such that if $\Omega$ is a $(\delta,R_0)$-Reifenberg flat domain satisfying $[A]_{R_0} \le \delta$ for some $R_0>0$, then
\begin{align*}
\|\mathbf{M}_{\alpha}(|\nabla u|^p)\|_{L^{q}_{\omega}(\Omega)} \le C \|\mathbf{M}_{\alpha}(|f|^{\frac{p}{p-1}} + |\nabla g|^p)\|_{L^{q}_{\omega}(\Omega)}, 
\end{align*}
which is equivalent to 
\begin{align}\label{pw-1}
\left(\int_{\Omega} |\mathbf{M}_{\alpha}(|\nabla u|^p)|^q \omega(x) dx\right)^{\frac{1}{q}} \le C \left(\int_{\Omega} |\mathbf{M}_{\alpha}(|f|^{\frac{p}{p-1}} + |\nabla g|^p)|^q \omega(x) dx\right)^{\frac{1}{q}}. 
\end{align}
For any $z \in \mathbb{R}^n$ and $\varepsilon>0$ small enough, let us set $h = \chi_{B_{\varepsilon}(z)}$ and $\tilde{\omega} = \mathbf{I}_{\beta}h$. We will show that $\tilde{\omega} \in \mathcal{A}_1$. Indeed, we now consider $\omega_0(x) = |x|^{1-n}$, $ x \in \mathbb{R}^n$. It is easily seen that $\omega_0 \in \mathcal{A}_1$. In other words, there exists a constant $C_0>0$ such that  
\begin{align}\label{pw-2}
\mathbf{M}(\omega_0)(x) \le C_0 \omega_0 (x), \quad \forall x \in \mathbb{R}^n.
\end{align}
Thanks to Fubini's Theorem, one may conclude from~\eqref{pw-2} that 
\begin{align*}
\mathbf{M}(\tilde{\omega})(x) \le C_0 \tilde{\omega} (x), \quad \forall x \in \mathbb{R}^n,
\end{align*}
which implies that $\tilde{\omega} \in \mathcal{A}_1$. Therefore, one can apply~\eqref{pw-1} with $\omega = \tilde{\omega}$ to arrive
\begin{align*}
& \int_{\mathbb{R}^n} \chi_{\Omega}(x) |\mathbf{M}_{\alpha}(|\nabla u|^p)(x)|^q  \int_{\mathbb{R}^n}  \frac{\chi_{B_{\varepsilon}(z)}(y)}{|y-x|^{n-\beta}} dy dx \\ & \hspace{2cm}  \le C \int_{\mathbb{R}^n} \chi_{\Omega}(x) |\mathbf{M}_{\alpha}(|f|^{\frac{p}{p-1}} + |\nabla g|^p)(x)|^q  \int_{\mathbb{R}^n}  \frac{\chi_{B_{\varepsilon}(z)}(y)}{|y-x|^{n-\beta}} dy dx.
\end{align*} 
By Fubini's theorem, it gives
\begin{align*}
& \int_{\mathbb{R}^n} \chi_{B_{\varepsilon}(z)}(y) \int_{\mathbb{R}^n} \frac{\chi_{\Omega}(x)|\mathbf{M}_{\alpha}(|\nabla u|^p)(x)|^q}{|y-x|^{n-\beta}} dx dy \\ & \hspace{2cm} \le C \int_{\mathbb{R}^n} \chi_{B_{\varepsilon}(z)}(y) \int_{\mathbb{R}^n}\frac{\chi_{\Omega}(x)|\mathbf{M}_{\alpha}(|f|^{\frac{p}{p-1}} + |\nabla g|^p)(x)|^q}{|y-x|^{n-\beta}} dx dy.
\end{align*}
And it follows that
\begin{align}\label{est:Ifg}
\fint_{B_{\varepsilon}(z)} \mathbf{I}_{\beta}\left(\chi_{\Omega}|\mathbf{M}_{\alpha}(|\nabla u|^p)|^q \right)(y)dy \le C \fint_{B_{\varepsilon}(z)} \mathbf{I}_{\beta}\left( \chi_{\Omega}|\mathbf{M}_{\alpha}(|f|^{\frac{p}{p-1}} + |\nabla g|^p)|^q \right)(y)dy.
\end{align}
By passing $\varepsilon$ to $0$ in \eqref{est:Ifg}, one concludes the following point-wise inequality:
\begin{align*}
\mathbf{I}_{\beta}\left(\chi_{\Omega}|\mathbf{M}_{\alpha}(|\nabla u|^p)|^q \right)(x) \le C \mathbf{I}_{\beta}\left( \chi_{\Omega}|\mathbf{M}_{\alpha}(|f|^{\frac{p}{p-1}} + |\nabla g|^p)|^q \right)(x), 
\end{align*}
holds for almost everywhere $x \in \mathbb{R}^n$. The proof is then complete.
\end{proof}

\section{Applications}\label{sec:app}
In this section, we apply the point-wise estimate in Theorem~\ref{theo:I-alpha} to study the solvability of the generalized equation~\eqref{eq:existI}:
\begin{align*}
\begin{cases} -\mathrm{div}(A(x,\nabla u)) & = \ \mathbf{I}_{\beta}(|\nabla u|^p)^q + \mathrm{div}(f), \ \mbox{ in } \ \Omega, \\ \hspace{1.5cm} u & = \  g, \ \mbox{ on } \partial \Omega,\end{cases}
\end{align*}
where $\beta \in (0,n)$, $p>1$, $q>0$. In addition, problem \eqref{eq:existI} is set under suitable assumptions on the boundary of domain $\Omega$ and the coefficients $A$, the same hypotheses of known data $f$, $g$ in Theorem~\ref{theo:I-alpha}. In particular, in this section we will show that this equation admits at least one solution under an additional Riesz capacity condition on the data. Moreover, we also show that a type of this Riesz capacity condition is necessary for the existence result. Our key point is based on some comparison estimates on Riesz and Wolff potentials which are firstly discussed in the next subsection.
\subsection{Comparisons on Riesz and Wolff potentials}
Let us first recall the following lemma which links a condition on the Wolff potential of a measure with a Riesz capacity assumption in the whole space.  Here, we send the interested reader to~\cite{Phuc2008} for the proof of this lemma.
\begin{proposition}\label{prop:Hung-Veron}
Let $1 < \beta_2 < \frac{n}{\beta_1}$, $s > \beta_2 -1$ and $\nu  \in \mathcal{M}^+(\mathbb{R}^n)$. Two following statements are equivalent:
\begin{itemize}
\item[(i)] The inequality
\begin{align*}
\nu (K) \le c \ \mathrm{Cap}_{\mathbf{I}_{\beta_1\beta_2},\frac{s}{s-\beta_2+1}}(K)
\end{align*}
holds for any compact set $K \subset \mathbb{R}^n$, for a constant $c$.
\item[(ii)] The inequality
\begin{align*}
\int_{\mathbb{R}^n} \left(\mathbf{W}_{\beta_1,\beta_2}(\chi_{B_r(x)}\nu )(y)\right)^s dy \le c \nu (B_r(x))
\end{align*}
holds for any ball $B_r(x) \subset \mathbb{R}^n$.
\end{itemize}
\end{proposition}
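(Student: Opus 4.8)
The plan is to recognise both (i) and (ii) as two faces of a single fact: the boundedness of a \emph{linear} Riesz potential as an operator $L^{p_0}(dx)\to L^{p_0}(d\nu)$. Throughout I would set $\gamma=\beta_1\beta_2$, so that $0<\gamma<n$ because $\beta_2<n/\beta_1$, and $p_0=\frac{s}{s-\beta_2+1}$ with conjugate exponent $p_0'=\frac{s}{\beta_2-1}$; the hypothesis $s>\beta_2-1$ is exactly what makes $1<p_0<\infty$, and the capacity in (i) is then precisely $\mathrm{Cap}_{\mathbf{I}_\gamma,p_0}$. The strategy is the chain
\begin{equation*}
\text{(ii)}\ \Longleftrightarrow\ (\mathrm{T})\ \Longleftrightarrow\ \mathbf{I}_\gamma\colon L^{p_0}(dx)\to L^{p_0}(d\nu)\ \text{bounded}\ \Longleftrightarrow\ \text{(i)},
\end{equation*}
where $(\mathrm{T})$ is the local testing condition $\int_{\mathbb{R}^n}\big(\mathbf{I}_\gamma(\chi_{B_r(x)}\nu)(y)\big)^{p_0'}\,dy\le c\,\nu(B_r(x))$ for all balls $B_r(x)\subset\mathbb{R}^n$.

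The first link, (ii)$\Leftrightarrow(\mathrm{T})$, is where the nonlinearity is removed: by the Havin--Maz'ya/Wolff-potential comparison (see~\cite{AH}), for every $\mu\in\mathcal{M}^+(\mathbb{R}^n)$ one has $\int_{\mathbb{R}^n}\big(\mathbf{W}_{\beta_1,\beta_2}(\mu)\big)^s\,dy\approx\int_{\mathbb{R}^n}\big(\mathbf{I}_\gamma(\mu)\big)^{p_0'}\,dy$, with constants depending only on $n,\beta_1,\beta_2,s$; applying this with $\mu=\chi_{B_r(x)}\nu$ turns (ii) into $(\mathrm{T})$ and back. The middle link is the Kerman--Sawyer (Maz'ya-type) local-testing characterisation of the two-weight inequality for $\mathbf{I}_\gamma$ with one Lebesgue weight, and the last link, $\mathbf{I}_\gamma\colon L^{p_0}(dx)\to L^{p_0}(d\nu)\Leftrightarrow$ (i), is Adams' capacitary strong-type inequality.

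Rather than quoting all three black boxes, I would prove the two implications of the outer equivalence directly once $(\mathrm{T})$ is in hand. For $(\mathrm{T})\Rightarrow$ (i): first upgrade $(\mathrm{T})$ from balls to arbitrary compact sets $K$ by a Whitney/dyadic covering of $\mathbb{R}^n$ relative to $K$ together with the quasi-subadditivity of $E\mapsto\int_{\mathbb{R}^n}(\mathbf{I}_\gamma(\chi_E\nu))^{p_0'}\,dy$, obtaining $\int_{\mathbb{R}^n}(\mathbf{I}_\gamma(\chi_K\nu))^{p_0'}\,dy\le c\,\nu(K)$; then, for any admissible $\phi\in L^{p_0}_+(\mathbb{R}^n)$ with $\mathbf{I}_\gamma\phi\ge\chi_K$, Fubini and Hölder give $\nu(K)\le\int_{\mathbb{R}^n}\phi\,\mathbf{I}_\gamma(\chi_K\nu)\,dy\le\|\phi\|_{L^{p_0}}\,(c\,\nu(K))^{1/p_0'}$, hence $\nu(K)^{1/p_0}\le c^{1/p_0'}\|\phi\|_{L^{p_0}}$, and taking the infimum over $\phi$ yields $\nu(K)\le c\,\mathrm{Cap}_{\mathbf{I}_\gamma,p_0}(K)$. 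For (i)$\Rightarrow(\mathrm{T})$: Adams' inequality turns (i) into $\|\mathbf{I}_\gamma h\|_{L^{p_0}(d\nu)}\le C\|h\|_{L^{p_0}(dx)}$ for $h\ge0$; dualising (here $p_0>1$ is used) gives $\|\mathbf{I}_\gamma(g\,d\nu)\|_{L^{p_0'}(dx)}\le C\|g\|_{L^{p_0'}(d\nu)}$, and the choice $g=\chi_{B_r(x)}$ is exactly $(\mathrm{T})$.

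The hard part will be the bookkeeping, not any single estimate. One must verify the Havin--Maz'ya comparison for the \emph{truncated} measures $\chi_{B_r(x)}\nu$ with constants uniform in $x$ and $r$ --- the truncation is genuinely needed, since untruncated Wolff potentials have a different global integrability --- carry out carefully the Whitney-type passage from ball testing to compact-set testing, and check that the exponents line up so that the capacity dual to the $L^{p_0}$ trace inequality is exactly the $(\gamma,p_0)=(\beta_1\beta_2,\,s/(s-\beta_2+1))$-capacity written in (i), using in particular $p_0'=s/(\beta_2-1)$. All three classical ingredients (Adams' theorem, the Kerman--Sawyer local-testing characterisation, and the Wolff-potential comparison) are standard, which is why~\cite{Phuc2008} simply cites the result; the work lies entirely in assembling them with the correct indices and localisations.
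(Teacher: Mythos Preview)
The paper does not prove this proposition at all; it simply states it and refers the reader to~\cite{Phuc2008}. Your outline is precisely the standard argument one finds there (and in the earlier Maz'ya--Verbitsky and Cascante--Ortega--Verbitsky work on which it rests): linearise (ii) via the Hedberg--Wolff comparison $\int(\mathbf{W}_{\beta_1,\beta_2}\mu)^s\,dy\approx\int(\mathbf{I}_{\gamma}\mu)^{p_0'}\,dy$, and then identify the resulting Riesz-potential testing condition with the capacitary condition (i) through the Kerman--Sawyer and Maz'ya--Adams characterisations of the trace inequality $\|\mathbf{I}_\gamma h\|_{L^{p_0}(d\nu)}\lesssim\|h\|_{L^{p_0}(dx)}$. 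Your index bookkeeping ($\gamma=\beta_1\beta_2$, $p_0=s/(s-\beta_2+1)$, $p_0'=s/(\beta_2-1)$) is correct.

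One caution about your ``direct'' route for $(\mathrm{T})\Rightarrow$(i). The step ``upgrade $(\mathrm{T})$ from balls to arbitrary compact sets by a Whitney covering together with the quasi-subadditivity of $E\mapsto\int(\mathbf{I}_\gamma(\chi_E\nu))^{p_0'}dy$'' is not innocent: since $p_0'>1$, for disjoint pieces $E_j$ one has $(\sum_j\mathbf{I}_\gamma(\chi_{E_j}\nu))^{p_0'}\ge\sum_j(\mathbf{I}_\gamma(\chi_{E_j}\nu))^{p_0'}$, so the naive decomposition goes the wrong way. In fact the inequality $\int(\mathbf{I}_\gamma(\chi_K\nu))^{p_0'}dy\le c\,\nu(K)$ for \emph{all} compact $K$ is, by duality, equivalent to the two-weight boundedness itself, so deriving it from ball testing alone is exactly the content of the Kerman--Sawyer theorem you were trying to sidestep. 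Keep the three-black-box chain you described first; the shortcut does not actually avoid Kerman--Sawyer.
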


The next lemma is directly a consequence of~\cite[Lemma 2.1]{BNV2018}. The detail proof can be also found in~\cite{BNV2018}.
\begin{lemma}\label{lem:Hung-Veron}
Let $k, m \in \mathbb{R}^+$ and $s \in \mathbb{R}$. Assume that $H: \mathbb{R}^+ \to \mathbb{R}^+$ is a non-decreasing function. There exists a positive constant $C=C(k,m,s)$ such that
\begin{align*}
\int_0^{\infty} \varrho^{k} \left(\int_{\varrho}^{\infty} \frac{H(r)}{r^{s}} \frac{dr}{r}\right)^{m} \frac{d\varrho}{\varrho} \le C \int_0^{\infty} \varrho^{k} \left( \frac{H(\varrho)}{\varrho^{s}}\right)^{m} \frac{d\varrho}{\varrho}.
\end{align*}
\end{lemma}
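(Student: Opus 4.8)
The plan is to reduce the claimed inequality to a one–dimensional Hardy–type estimate and to prove the latter by a dyadic decomposition of the inner integral that exploits the monotonicity of $H$. Set $\Phi(\varrho):=\int_\varrho^\infty H(r)\,r^{-s-1}\,dr\in[0,\infty]$, so that the left–hand side of the lemma equals $\int_0^\infty \varrho^{k-1}\Phi(\varrho)^m\,d\varrho$, while, after absorbing the powers of $\varrho$, the right–hand side equals $R:=\int_0^\infty \varrho^{k-1-sm}H(\varrho)^m\,d\varrho$. If $R=\infty$ there is nothing to prove, so the whole content is the bound $\int_0^\infty \varrho^{k-1}\Phi^m\,d\varrho\le C\,R$ when $R<\infty$, with $C$ coming out depending only on $k,m,s$.

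First I would split $\Phi(\varrho)=\sum_{i\ge 0}g_i(\varrho)$ with $g_i(\varrho):=\int_{2^i\varrho}^{2^{i+1}\varrho}H(r)\,r^{-s-1}\,dr$, using that the intervals $[2^i\varrho,2^{i+1}\varrho]$ tile $[\varrho,\infty)$. On each such interval monotonicity gives $H(r)\le H(2^{i+1}\varrho)$, while an elementary computation yields $\int_{2^i\varrho}^{2^{i+1}\varrho}r^{-s-1}\,dr=c_s\,(2^i\varrho)^{-s}$ with $c_s=\tfrac{1-2^{-s}}{s}>0$ (and $c_0=\log 2$); note that this identity, and hence the estimate $g_i(\varrho)\le c_s\,2^{-is}\varrho^{-s}H(2^{i+1}\varrho)$, is valid for every real $s$. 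Raising to the $m$-th power, integrating against $\varrho^{k-1}\,d\varrho$, and changing variables $u=2^{i+1}\varrho$ converts the $\varrho$-integral back into $R$ up to an explicit scalar, and the powers of $2$ collect to give the key bound $\int_0^\infty \varrho^{k-1}g_i(\varrho)^m\,d\varrho\le c_s^m\,2^{sm}\,2^{-(i+1)k}\,R$.

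To finish I would sum over $i$, distinguishing the two ranges of $m$. If $m\ge 1$, Minkowski's inequality in $L^m(\varrho^{k-1}\,d\varrho)$ gives $\bigl(\int_0^\infty\varrho^{k-1}\Phi^m\bigr)^{1/m}\le\sum_{i\ge0}\bigl(\int_0^\infty\varrho^{k-1}g_i^m\bigr)^{1/m}\le c_s\,2^{s}\,R^{1/m}\sum_{i\ge0}2^{-(i+1)k/m}$, and the geometric series converges precisely because $k>0$. If $0<m<1$, I would instead use the elementary subadditivity $\bigl(\sum_i g_i\bigr)^m\le\sum_i g_i^m$, integrate term by term, and sum $\sum_{i\ge0}2^{-(i+1)k}$, again finite since $k>0$. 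In either case one arrives at $\int_0^\infty\varrho^{k-1}\Phi^m\,d\varrho\le C(k,m,s)\,R$, which is the assertion. There is no genuine obstacle here: the only points requiring care are keeping the dyadic bookkeeping straight, checking that the closed form of $\int r^{-s-1}\,dr$ (and therefore the whole argument) is insensitive to the sign of $s$, and treating $m\ge1$ and $m<1$ separately; alternatively, as indicated in the paper, the statement can be read off verbatim from \cite[Lemma 2.1]{BNV2018}.
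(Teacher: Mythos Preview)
Your argument is correct. The dyadic decomposition of the inner integral, the use of monotonicity of $H$ to bound each block $g_i$, the change of variables $u=2^{i+1}\varrho$, and the separate summation via Minkowski for $m\ge 1$ and via subadditivity $(\sum a_i)^m\le\sum a_i^m$ for $0<m<1$ all go through as stated; the key point that the geometric series converges is secured precisely by $k>0$, and your observation that $c_s=\tfrac{1-2^{-s}}{s}>0$ for all $s\neq 0$ (with $c_0=\log 2$) is accurate.

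As for comparison with the paper: the paper does not supply a proof at all but simply defers to \cite[Lemma~2.1]{BNV2018}, which you also cite at the end. Your write-up therefore goes beyond what the paper offers by giving a self-contained Hardy-type argument. The proof in \cite{BNV2018} proceeds along essentially the same lines (dyadic splitting and monotonicity), so your approach is not genuinely different from the reference the paper invokes---it is, in effect, a reconstruction of that reference's argument.
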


Using Lemma~\ref{lem:Hung-Veron} and Proposition~\ref{prop:Hung-Veron}, we can perform some comparison estimates between Riesz and Wolff potentials.

\begin{lemma}\label{lem:2I} Let $\beta_1, \beta_2 \in (0,n)$. 
If $q>0$ and $\nu  \in \mathcal{M}^+(\mathbb{R}^n)$ then there exists $C_1$ depending on $\beta_1, \beta_2, n, q$ such that
\begin{align}\label{eq:lem-2Ia}
\mathbf{I}_{{\beta_2}}\left(\mathbf{I}_{{\beta_1}}(\nu )^q\right)(x) \ge C_1 \mathbf{W}_{\frac{q{\beta_1} + {\beta_2}}{q+1}, \frac{1}{q}+1}(\nu )(x), \quad \mbox{ in } \mathbb{R}^n.
\end{align}
If $0<q<\frac{n}{n-{\beta_1}}$ and $\nu  \in \mathcal{M}^+(\mathbb{R}^n)$ then there exists $C_2$ depending on $\beta_1, \beta_2, n, q$ such that
\begin{align}\label{eq:lem-2I}
\mathbf{I}_{{\beta_2}}\left(\mathbf{I}_{{\beta_1}}(\nu )^q\right)(x) \le C_2 \mathbf{W}_{\frac{q{\beta_1} + {\beta_2}}{q+1}, \frac{1}{q}+1}(\nu )(x), \quad \mbox{ in } \mathbb{R}^n.
\end{align}
\end{lemma}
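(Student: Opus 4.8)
The plan is to reduce both inequalities to a single two-sided estimate for a radial average and then to recognise the outer integration as a Wolff potential. For the parameters in play, $\alpha=\frac{q\beta_1+\beta_2}{q+1}$ and $\beta=\frac1q+1$ give $\frac{1}{\beta-1}=q$ and $n-\alpha\beta=n-\beta_1-\frac{\beta_2}{q}$, so
\begin{align*}
\mathbf{W}_{\frac{q\beta_1+\beta_2}{q+1},\,\frac1q+1}(\nu)(x)=\int_0^\infty\left(\frac{\nu(B_\varrho(x))}{\varrho^{\,n-\beta_1-\beta_2/q}}\right)^{q}\frac{d\varrho}{\varrho}.
\end{align*}
First I would record the elementary Fubini identity, valid for $\beta\in(0,n)$ and any $\mu\in\mathcal{M}^+(\mathbb{R}^n)$,
\begin{align*}
\mathbf{I}_{\beta}(\mu)(x)=(n-\beta)\int_0^\infty\frac{\mu(B_\varrho(x))}{\varrho^{\,n-\beta}}\,\frac{d\varrho}{\varrho},
\end{align*}
which follows by writing $|x-y|^{-(n-\beta)}=(n-\beta)\int_{|x-y|}^\infty\varrho^{-(n-\beta)}\frac{d\varrho}{\varrho}$ and swapping the integrals. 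Applying it with $\beta=\beta_2$ and $d\mu=\mathbf{I}_{\beta_1}(\nu)^q\,dy$ turns the common left-hand side of \eqref{eq:lem-2Ia} and \eqref{eq:lem-2I} into
\begin{align*}
\mathbf{I}_{\beta_2}\!\left(\mathbf{I}_{\beta_1}(\nu)^q\right)(x)=(n-\beta_2)\int_0^\infty\varrho^{\beta_2-n}\left(\int_{B_\varrho(x)}\mathbf{I}_{\beta_1}(\nu)(y)^q\,dy\right)\frac{d\varrho}{\varrho},
\end{align*}
so the whole matter reduces to estimating the inner integral $\int_{B_\varrho(x)}\mathbf{I}_{\beta_1}(\nu)(y)^q\,dy$ from above and below.

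For the lower bound \eqref{eq:lem-2Ia} I would simply discard the far part of $\mathbf{I}_{\beta_1}(\nu)$: if $y,z\in B_\varrho(x)$ then $|y-z|<2\varrho$, so $\mathbf{I}_{\beta_1}(\nu)(y)\ge\int_{B_\varrho(x)}|y-z|^{-(n-\beta_1)}\,d\nu(z)\ge 2^{-(n-\beta_1)}\nu(B_\varrho(x))\varrho^{-(n-\beta_1)}$ for every $y\in B_\varrho(x)$. Raising to the power $q$, integrating over $B_\varrho(x)$ (whose measure is $c_n\varrho^n$) and feeding the result into the radial identity yields $\mathbf{I}_{\beta_2}(\mathbf{I}_{\beta_1}(\nu)^q)(x)\ge C_1\int_0^\infty\nu(B_\varrho(x))^q\varrho^{\beta_2-q(n-\beta_1)}\frac{d\varrho}{\varrho}$, which is exactly $C_1$ times the Wolff potential displayed above. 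This half needs no restriction on $q$.

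The upper bound \eqref{eq:lem-2I} is the substantive part, and here $q<\frac{n}{n-\beta_1}$ enters. For fixed $x$ and $\varrho$ I split, for $y\in B_\varrho(x)$,
\begin{align*}
\mathbf{I}_{\beta_1}(\nu)(y)=\mathbf{I}_{\beta_1}\!\big(\chi_{B_{2\varrho}(x)}\nu\big)(y)+\int_{\mathbb{R}^n\setminus B_{2\varrho}(x)}\frac{d\nu(z)}{|y-z|^{n-\beta_1}}.
\end{align*}
For the tail, $|x-z|\ge 2\varrho$ and $|x-y|<\varrho$ force $|y-z|\ge\tfrac12|x-z|$, so by the Fubini identity the tail is dominated, uniformly in $y\in B_\varrho(x)$, by $C\int_\varrho^\infty\nu(B_t(x))\,t^{-(n-\beta_1)}\frac{dt}{t}$. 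For the local term I would invoke the classical weak-type bound $\mathbf{I}_{\beta_1}:\mathcal{M}^+\to L^{n/(n-\beta_1),\infty}$ with norm controlled by the total mass; since $q<n/(n-\beta_1)$, Kolmogorov's inequality on $B_\varrho(x)$ then gives $\int_{B_\varrho(x)}\mathbf{I}_{\beta_1}(\chi_{B_{2\varrho}(x)}\nu)(y)^q\,dy\le C\,\nu(B_{2\varrho}(x))^q\varrho^{\,n-q(n-\beta_1)}$. Combining the two contributions via $(a+b)^q\le C(a^q+b^q)$ and inserting into the radial identity bounds $\mathbf{I}_{\beta_2}(\mathbf{I}_{\beta_1}(\nu)^q)(x)$ by a constant multiple of
\begin{align*}
\int_0^\infty\frac{\nu(B_{2\varrho}(x))^q}{\varrho^{\,q(n-\beta_1)-\beta_2}}\,\frac{d\varrho}{\varrho}+\int_0^\infty\varrho^{\beta_2}\left(\int_\varrho^\infty\frac{\nu(B_t(x))}{t^{\,n-\beta_1}}\,\frac{dt}{t}\right)^{q}\frac{d\varrho}{\varrho}.
\end{align*}
The first integral equals $C\,\mathbf{W}_{\frac{q\beta_1+\beta_2}{q+1},\,\frac1q+1}(\nu)(x)$ after the harmless rescaling $2\varrho\mapsto\varrho$. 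The second has exactly the form treated by Lemma~\ref{lem:Hung-Veron}, applied with $H(r)=\nu(B_r(x))$ (non-decreasing), $k=\beta_2$, $m=q$, $s=n-\beta_1$; it collapses to $C\int_0^\infty\varrho^{\beta_2}\big(\nu(B_\varrho(x))\varrho^{-(n-\beta_1)}\big)^{q}\frac{d\varrho}{\varrho}$, which is again $C$ times the same Wolff potential. Adding the two pieces gives \eqref{eq:lem-2I}. (If $\beta_1+\beta_2/q\ge n$ the Wolff potential is identically $+\infty$ and both inequalities are trivially true, so one may assume $\beta_1+\beta_2/q<n$ throughout.)

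I expect the main obstacle to be the local-term estimate in the upper bound: one must simultaneously localise the measure to $B_{2\varrho}(x)$, exploit the borderline integrability of the Riesz potential of a measure through the weak-type/Kolmogorov argument — it is precisely this step that forces $q<n/(n-\beta_1)$ — and then recognise that the remaining iterated radial integral is of the shape handled by Lemma~\ref{lem:Hung-Veron}. The lower bound, by contrast, is essentially immediate once the Fubini identity is in place.
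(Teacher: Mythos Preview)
Your proof is correct and follows essentially the same route as the paper: the radial Fubini identity for Riesz potentials, the single-scale lower bound for \eqref{eq:lem-2Ia}, and for \eqref{eq:lem-2I} the local/tail split with the weak-type $L^{n/(n-\beta_1),\infty}$ bound plus Kolmogorov on the local piece and Lemma~\ref{lem:Hung-Veron} on the tail. Your presentation of the upper bound is in fact more explicit than the paper's, which compresses the local contribution into the constant $C$ without displaying it separately.
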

\begin{proof} We remark that the Riesz potential $\mathbf{I}_{{\beta_1}}$ defined by~\eqref{def:Riesz} can be also rewritten as the following form
\begin{align*}
\mathbf{I}_{{\beta_1}}(\nu )(x) = \int_0^{\infty} \frac{\nu (B_r(x))}{r^{n-{\beta_1}}} \frac{dr}{r}.
\end{align*}
For every $x \in \mathbb{R}^n$, with notice that $B_{\varrho}(x) \subset B_{2\varrho}(y)$ for all $y \in B_{\varrho}(x)$, one gets that
\begin{align*}
\mathbf{I}_{{\beta_2}}\left(\mathbf{I}_{{\beta_1}}(\nu )^q\right)(x) & = \int_0^{\infty} \frac{1}{\varrho^{n-{\beta_2}}} \int_{B_{\varrho}(x)} \left( \int_0^{\infty} \frac{\nu (B_r(y))}{r^{n-{\beta_1}}} \frac{dr}{r} \right)^qdy \frac{d\varrho}{\varrho} \\
& \ge C \int_0^{\infty} \frac{1}{\varrho^{n-{\beta_2}}} \int_{B_{\varrho}(x)} \left( \int_0^{\infty} \frac{\nu (B_{2\varrho}(y))}{\varrho^{n-{\beta_1}}} dy \right)^q \frac{d\varrho}{\varrho} \\
& \ge C_1 \int_0^{\infty} \varrho^{{\beta_2}}  \left(  \frac{\nu (B_{\varrho}(x))}{\varrho^{n-{\beta_1}}} \right)^q \frac{d\varrho}{\varrho} \\
& = C_1 \int_0^{\infty}  \left(  \frac{\nu (B_{\varrho}(x))}{\varrho^{n-{\beta_1} -\frac{{\beta_2}}{q}}} \right)^q \frac{d\varrho}{\varrho} \\
& = C_1 \mathbf{W}_{\frac{q{\beta_1 + {\beta_2}}}{q+1},\frac{1}{q}+1}(\nu )(x),
\end{align*}
which is exactly~\eqref{eq:lem-2Ia}. On the other hand, for $0<q<\frac{n}{n-{\beta_1}}$, we recall the following estimate on Riesz potential
\begin{align*}
\|\mathbf{I}_{{\beta_1}}(\tilde{\nu })\|_{L^{\frac{n}{n-{\beta_1}},\infty}} \le C \tilde{\nu }({\mathbb{R}^n}), \quad \forall \tilde{\nu } \in \mathcal{M}^+_b(\mathbb{R}^n),
\end{align*}
which guarantees that 
\begin{align*}
\int_{B_r(x)}\mathbf{I}_{{\beta_1}}(\tilde{\nu })^q dy \le C r^n \left(\frac{\tilde{\nu }(\mathbb{R}^n)}{r^{n-{\beta_1}}}\right)^q, \quad \forall x \in \mathbb{R}^n, \ \forall r>0.
\end{align*}
Applying this inequality for $\tilde{\nu } = \chi_{B_{2r}(x)} \nu $, one has
\begin{align*}
\int_{B_r(x)}\mathbf{I}_{{\beta_1}}(\chi_{B_{2r}(x)} \nu )^q dy \le C r^n \left(\frac{\nu ({B_{2r}(x)})}{r^{n-{\beta_1}}}\right)^q, \quad \forall x \in \mathbb{R}^n, \ \forall r>0.
\end{align*}
Basing on this fact and notice that for all $r \ge \varrho>0$, since $B_r(y) \subset B_{2r}(x)$ for any $y \in B_{\varrho}(x)$, we may estimate as below
\begin{align*}
\mathbf{I}_{{\beta_2}}\left(\mathbf{I}_{{\beta_1}}(\nu )^q\right)(x) & \le C\int_0^{\infty} \frac{1}{\varrho^{n-{\beta_2}}} \int_{B_{\varrho}(x)} \left( \int_{\varrho}^{\infty} \frac{\nu (B_r(y))}{r^{n-{\beta_1}}} \frac{dr}{r} \right)^qdy \frac{d\varrho}{\varrho} \\
& \le C \int_0^{\infty} \frac{1}{\varrho^{n-{\beta_2}}} \int_{B_{\varrho}(x)} \left( \int_{\varrho}^{\infty} \frac{\nu (B_{2r}(x))}{r^{n-{\beta_1}}} \frac{dr}{r} \right)^q dy \frac{d\varrho}{\varrho} \\
& \le C \int_0^{\infty} {\varrho^{{\beta_2}}}  \left( \int_{\varrho}^{\infty} \frac{\nu (B_{2r}(x))}{r^{n-{\beta_1}}} \frac{dr}{r} \right)^q \frac{d\varrho}{\varrho}.
\end{align*}
Thanks to Lemma~\ref{lem:Hung-Veron}, we obtain from above inequality that
\begin{align*}
\mathbf{I}_{{\beta_2}}\left(\mathbf{I}_{{\beta_1}}(\nu )^q\right)(x) & \le C_2 \int_0^{\infty} {\varrho^{{\beta_2}}}  \left( \frac{\nu (B_{\varrho}(x))}{\varrho^{n-{\beta_1}}}\right)^q \frac{d\varrho}{\varrho},
\end{align*}
which leads to~\eqref{eq:lem-2I}. The proof is complete.
\end{proof}

\begin{lemma}\label{lem:cond-varphi} 
Let ${\beta_1}, \, {\beta_2}, \, \beta_3 \in (0,n)$, $0<q<\frac{n}{n-{\beta_1}}$, $qs >1$ and $\nu  \in \mathcal{M}^+(\mathbb{R}^n)$. Assume that the following inequality
\begin{align}\label{eq:cond-varphi-1}
\nu (K) \le \theta \ \mathrm{Cap}_{\mathbf{I}_{\beta_1 + \frac{\beta_2}{q}},\frac{qs}{qs-1}}(K),
\end{align}
holds for any compact set $K \subset \mathbb{R}^n$ and for a constant $\theta$. There holds
\begin{align}\label{re-lem:2I}
\mathbf{I}_{{\beta_3}}\left(\mathbf{I}_{{\beta_2}}\left(\mathbf{I}_{{\beta_1}}(\nu )^q\right)\right)^s(x) \le C \theta \ \mathbf{I}_{{\beta_3}}(\nu )(x), \quad \mbox{ in } \ \mathbb{R}^n.
\end{align}
\end{lemma}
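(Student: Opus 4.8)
The plan is to derive the iterated bound \eqref{re-lem:2I} from three ingredients already at hand: the Riesz--Wolff comparison \eqref{eq:lem-2I}, the capacity--Wolff equivalence of Proposition~\ref{prop:Hung-Veron}, and the iteration estimate of Lemma~\ref{lem:Hung-Veron}. Set $a_1=\frac{q\beta_1+\beta_2}{q+1}$ and $a_2=1+\frac1q$, so that $a_1a_2=\beta_1+\frac{\beta_2}{q}$, $a_2-1=\frac1q$ and $\frac{1}{a_2-1}=q$. Since $0<q<\frac{n}{n-\beta_1}$, the inequality \eqref{eq:lem-2I} gives $\mathbf{I}_{\beta_2}\big(\mathbf{I}_{\beta_1}(\nu)^q\big)(y)\le C_2\,\mathbf{W}_{a_1,a_2}(\nu)(y)$ for every $y$; as $\mathbf{I}_{\beta_3}$ is order preserving, it then suffices to prove
\[ \mathbf{I}_{\beta_3}\big((\mathbf{W}_{a_1,a_2}(\nu))^s\big)(x)\ \le\ C\theta\,\mathbf{I}_{\beta_3}(\nu)(x),\qquad x\in\mathbb{R}^n . \]

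First I would recast the hypothesis \eqref{eq:cond-varphi-1}. One has $a_2>1$, the condition $qs>1$ is precisely $s>a_2-1$, moreover $\frac{s}{s-a_2+1}=\frac{qs}{qs-1}$ and $a_1a_2=\beta_1+\frac{\beta_2}{q}$; assuming (as is implicit here, and satisfied in the applications) that $a_1a_2<n$, Proposition~\ref{prop:Hung-Veron}, applied with $a_1,a_2,s$ playing the roles of its $\beta_1,\beta_2,s$, turns \eqref{eq:cond-varphi-1} into the local Wolff estimate
\[ \int_{\mathbb{R}^n}\big(\mathbf{W}_{a_1,a_2}(\chi_{B_r(x_0)}\nu)(y)\big)^s\,dy\ \le\ C\theta\,\nu(B_r(x_0)),\qquad B_r(x_0)\subset\mathbb{R}^n . \]

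Next, using the representation $\mathbf{I}_{\beta_3}(h)(x)=c_n\int_0^\infty\rho^{\beta_3-n}\big(\int_{B_\rho(x)}h\,dy\big)\frac{d\rho}{\rho}$, I would fix $\rho>0$, write $\nu=\chi_{B_{2\rho}(x)}\nu+\chi_{B_{2\rho}(x)^c}\nu$, and combine the quasi-subadditivity of $\mathbf{W}_{a_1,a_2}$ with the uniform bound $\mathbf{W}_{a_1,a_2}(\chi_{B_{2\rho}(x)^c}\nu)(y)\le\tau(\rho):=\int_\rho^\infty\big(\nu(B_{2t}(x))\,t^{-(n-a_1a_2)}\big)^q\frac{dt}{t}$, which holds for all $y\in B_\rho(x)$ because for $t\le\rho$ the ball $B_t(y)$ does not meet $B_{2\rho}(x)^c$. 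Integrating over $B_\rho(x)$ and using the local Wolff estimate with $r=2\rho$ yields
\[ \int_{B_\rho(x)}\big(\mathbf{W}_{a_1,a_2}(\nu)\big)^s\,dy\ \le\ C\big(\theta\,\nu(B_{2\rho}(x))+\rho^n\,\tau(\rho)^s\big). \]

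It then remains to integrate in $\rho$. The term $\theta\,\nu(B_{2\rho}(x))$ contributes exactly $C\theta\,\mathbf{I}_{\beta_3}(\nu)(x)$ after a change of variables. For the tail $\int_0^\infty\rho^{\beta_3}\tau(\rho)^s\frac{d\rho}{\rho}$, Lemma~\ref{lem:Hung-Veron} applied with the non-decreasing $H(t)=\nu(B_{2t}(x))^q$ bounds it by $C\int_0^\infty\rho^{\beta_3-(n-a_1a_2)qs}\nu(B_{2\rho}(x))^{qs}\frac{d\rho}{\rho}$; writing $\nu(B_{2\rho}(x))^{qs}=\nu(B_{2\rho}(x))^{qs-1}\,\nu(B_{2\rho}(x))$ and estimating the first factor through \eqref{eq:cond-varphi-1} together with the classical bound $\mathrm{Cap}_{\mathbf{I}_{a_1a_2},\frac{qs}{qs-1}}(B_{2\rho}(x))\le C\rho^{\,n-a_1a_2\frac{qs}{qs-1}}$ for the Riesz capacity of a ball (see~\cite{AH}), a short computation using the identity $a_1a_2\,q=q\beta_1+\beta_2$ shows that the powers of $\rho$ cancel precisely, so the tail is $\le C\theta^{qs-1}\int_0^\infty\rho^{\beta_3-n}\nu(B_{2\rho}(x))\frac{d\rho}{\rho}=C\theta^{qs-1}\mathbf{I}_{\beta_3}(\nu)(x)$. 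Summing the two contributions, and using $qs>1$ together with the smallness of $\theta$ available in the applications (so that $\theta^{qs-1}$ is absorbed into $C\theta$), gives \eqref{re-lem:2I}. The main obstacle is exactly this tail term: the part of $\mathbf{W}_{a_1,a_2}(\nu)$ generated by the mass of $\nu$ far from $x$ is genuinely global and is controlled only by invoking the iteration Lemma~\ref{lem:Hung-Veron} and the ball-capacity bound, and only because the exponent bookkeeping closes up exactly.
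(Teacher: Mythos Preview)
Your argument is correct in substance but takes a longer route than the paper, and it does not quite deliver the constant $C\theta$ promised in the statement. The paper's proof is essentially two lines: it applies \eqref{eq:lem-2I} exactly as you do, and then uses directly that the capacity condition \eqref{eq:cond-varphi-1} is equivalent (with comparable constants) not only to the \emph{localized} Wolff bound in Proposition~\ref{prop:Hung-Veron}(ii) but also to the \emph{non-localized} trace inequality
\[
\int_{B_{\varrho}(x)}\big(\mathbf{W}_{a_1,a_2}(\nu)(y)\big)^s\,dy \ \le\ C\theta\,\nu(B_{\varrho}(x)) \qquad \text{for every ball } B_{\varrho}(x),
\]
which is part of the same circle of equivalences in~\cite{Phuc2008}. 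Plugging this into the Riesz representation of $\mathbf{I}_{\beta_3}$ gives \eqref{re-lem:2I} immediately, with the correct factor $C\theta$.

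You instead restrict yourself to the localized form literally stated in Proposition~\ref{prop:Hung-Veron}, which forces the near/far decomposition of $\nu$ and the separate tail analysis via Lemma~\ref{lem:Hung-Veron} and the ball-capacity estimate. That computation is done correctly and the exponent bookkeeping indeed closes up, but the tail contributes $C\theta^{qs-1}\mathbf{I}_{\beta_3}(\nu)(x)$ rather than $C\theta\,\mathbf{I}_{\beta_3}(\nu)(x)$. Your remark that ``$\theta^{qs-1}$ is absorbed into $C\theta$'' is not valid in general (for $1<qs<2$ and small $\theta$ one has $\theta^{qs-1}\gg\theta$), so as written you prove only $\mathbf{I}_{\beta_3}(\cdots)\le C(\theta+\theta^{qs-1})\mathbf{I}_{\beta_3}(\nu)$. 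This suffices for the application in Theorem~\ref{theo:existence_Ialpha}, where one only needs the right-hand side to be small with $\theta$, but it is weaker than the lemma as stated. The clean fix is precisely the paper's shortcut: invoke the full Phuc--Verbitsky equivalence so that the non-localized trace inequality is available from the start, and the tail decomposition becomes unnecessary.
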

\begin{proof}
By Proposition~\ref{prop:Hung-Veron}, the fact that the inequality~\eqref{eq:cond-varphi-1} holds for any compact set $K \subset \mathbb{R}^n$ is equivalent to
\begin{align}\label{eq:lem-2I-a}
\int_{\mathbb{R}^n} \left(\mathbf{W}_{\frac{q\beta_1 +\beta_2}{q+1},\frac{1}{q}+1}(\chi_{B_r(x)}\nu )(y)\right)^s dy \le \theta \nu (B_r(x))
\end{align}
holds for any ball $B_r(x) \subset \mathbb{R}^n$.
Applying Lemma~\ref{lem:2I}, there holds
\begin{align}\label{eq:lem-2I-b}
\mathbf{I}_{{\beta_2}}\left(\mathbf{I}_{{\beta_1}}(\nu )^q\right)(x) \le C_1\mathbf{W}_{\frac{q\beta_1 +\beta_2}{q+1},\frac{1}{q}+1}(\nu )(x), \quad \mbox{ in } \ \mathbb{R}^n.
\end{align}
It follows from~\eqref{eq:lem-2I-a} and~\eqref{eq:lem-2I-b} that
\begin{align*}
\mathbf{I}_{{\beta_3}}\left(\mathbf{I}_{{\beta_2}}\left(\mathbf{I}_{{\beta_1}}(\nu )^q\right)\right)^s(x) & = \int_0^{\infty} \frac{1}{\varrho^{n-\beta_3}}  \left(\int_{B_{\varrho}(x)} \left( \mathbf{I}_{{\beta_2}}\left(\mathbf{I}_{{\beta_1}}(\nu )^q\right)(y) \right)^s dy \right) \frac{d\varrho}{\varrho} \\
& \le  \int_0^{\infty} \frac{1}{\varrho^{n-\beta_3}}  \left(\int_{B_{\varrho}(x)} \left( C_1\mathbf{W}_{\frac{q\beta_1 +\beta_2}{q+1},\frac{1}{q}+1}(\nu )(y) \right)^s dy \right) \frac{d\varrho}{\varrho}\\
& \le C_1^s \theta\int_0^{\infty} \frac{\nu (B_{\varrho}(x)) }{\varrho^{n-\beta_3}}  \frac{d\varrho}{\varrho},
\end{align*}
which leads to~\eqref{re-lem:2I}. The proof is complete.
\end{proof}

\subsection{Existence result}
We now prove Theorem~\ref{theo:existence_Ialpha} which presents a sufficient condition for the existence of a solution to equation~\eqref{eq:existI}. We start with the following lemma which can be obtained from Theorem~\ref{theo:I-alpha}.
\begin{lemma}\label{lem:app-1}
Let $p>1$,  $f \in L^{\frac{p}{p-1}}(\Omega; \mathbb{R}^n)$, $g \in W^{1,p}(\Omega;\mathbb{R})$ and $\mathbf{I}_1(\eta ) \in  L^{\frac{p}{p-1}}(\Omega; \mathbb{R})$.  Let $u$ be a weak solution to the following equation
\begin{align}\label{eq-lem:app-1}
\begin{cases} -\mathrm{div}(A(x,\nabla u)) & = \ \eta  + \mathrm{div}(f), \ \mbox{ in } \ \Omega, \\ \hspace{1.5cm} u & = \  g, \ \mbox{ on } \partial \Omega.\end{cases}
\end{align}
There exists a constant $\delta>0$ such that if $\Omega$ is a $(\delta,R_0)$-Reifenberg flat domain satisfying $[A]_{R_0} \le \delta$ for some $R_0>0$, then for $\beta \in (0,n)$
\begin{align*}
\mathbf{I}_{\beta} (|\nabla u|^p)(x) \le C^* \left(\mathbf{I}_{\beta} (\mathbf{I}_1(\eta )^{\frac{p}{p-1}}) + \mathbf{I}_{\beta}(|f|^{\frac{p}{p-1}} + |\nabla g|^p)\right)(x),
\end{align*}
for almost everywhere $x \in \mathbb{R}^n$ and for a constant $C^*>0$.
\end{lemma}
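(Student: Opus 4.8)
The idea is to take $\alpha=0$ in Theorem~\ref{theo:I-alpha}, so that $\mathbf{M}_0=\mathbf{M}$, and to apply it to the equation~\eqref{eq-lem:app-1} after absorbing the datum $\eta$ into a right-hand side of divergence form. First I would rewrite~\eqref{eq-lem:app-1} in the form $\mathrm{div}(A(x,\nabla u)) = \mathrm{div}(f - \mathbf{F})$, where $\mathbf{F}$ is a vector field with $\mathrm{div}\,\mathbf{F} = -\eta$; the natural choice is the Riesz-potential representation $\mathbf{F} = c_n \nabla \mathbf{I}_2(\eta)$, or equivalently one may solve $\mathrm{div}\,\mathbf{F}=-\eta$ directly. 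The point-wise bound $|\mathbf{F}(x)| \le C\,\mathbf{I}_1(\eta)(x)$ then follows from the standard estimate $|\nabla \mathbf{I}_2(\eta)| \le C\,\mathbf{I}_1(\eta)$. The hypothesis $\mathbf{I}_1(\eta) \in L^{p/(p-1)}(\Omega)$ guarantees that $\mathbf{F} \in L^{p/(p-1)}(\Omega;\mathbb{R}^n)$, so $u$ is a bona fide weak solution of a problem of the type~\eqref{eq:diveq} with datum $f-\mathbf{F}$ in place of $f$.

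Next I would invoke Theorem~\ref{theo:I-alpha} with $\alpha=0$ and $q=1$, applied to this reformulated equation, to get the point-wise estimate
\begin{align*}
\mathbf{I}_{\beta}\big(\mathbf{M}(|\nabla u|^p)\chi_{\Omega}\big)(x) \le C\,\mathbf{I}_{\beta}\big(\mathbf{M}(|f-\mathbf{F}|^{\frac{p}{p-1}} + |\nabla g|^p)\chi_{\Omega}\big)(x),
\end{align*}
valid for a.e.\ $x \in \mathbb{R}^n$. Since $|\nabla u|^p \le \mathbf{M}(|\nabla u|^p)$ a.e.\ (Lebesgue differentiation) and $\mathbf{I}_\beta$ is order-preserving, the left side dominates $\mathbf{I}_\beta(|\nabla u|^p)(x)$. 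For the right side I would use $\mathbf{M}(h\chi_\Omega) \le \mathbf{M}(h)$ pointwise on $\Omega$ together with the elementary inequality $|f-\mathbf{F}|^{p/(p-1)} \le 2^{p/(p-1)}(|f|^{p/(p-1)} + |\mathbf{F}|^{p/(p-1)}) \le C(|f|^{p/(p-1)} + \mathbf{I}_1(\eta)^{p/(p-1)})$, and then absorb the maximal function: since the fractional maximal operator reproduces the Riesz potential up to constants (this is exactly the $\alpha=0$ case already used in Lemma~\ref{lem:Ialpha}, where one shows $\mathbf{M}(\mathbf{I}_\beta h) \le C\,\mathbf{I}_\beta h$), one gets $\mathbf{I}_\beta(\mathbf{M}(h)) \le C\,\mathbf{I}_\beta(h)$ for non-negative $h$. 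Splitting $\mathbf{I}_\beta$ over the sum then yields exactly the asserted bound with $C^*$ depending on $n,p,\beta$ and the structural constants.

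The main obstacle is the bookkeeping at the first step: one must make sure that solving $\mathrm{div}\,\mathbf{F} = -\eta$ and controlling $|\mathbf{F}|$ pointwise by $\mathbf{I}_1(\eta)$ is done cleanly, and that the resulting $\mathbf{F}$ genuinely lies in $L^{p/(p-1)}(\Omega)$ so that Theorem~\ref{theo:I-alpha} applies verbatim — the hypothesis $\mathbf{I}_1(\eta) \in L^{p/(p-1)}(\Omega)$ is placed in the statement precisely to make this work. A subtlety is that $\eta$ is only a function (not a general measure) here, so $\mathbf{I}_2(\eta)$ and its gradient are well-defined and the identity $-\Delta \mathbf{I}_2(\eta) = c_n\,\eta$ is classical; if one prefers to avoid the explicit potential one may instead let $\mathbf{F}$ be $\nabla z$ where $z$ solves $-\Delta z = \eta$ in a large ball containing $\Omega$ with zero boundary data, and again Calderón–Zygmund/Riesz-potential estimates give $|\nabla z| \le C\,\mathbf{I}_1(|\eta|)$. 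Everything after this reduction is routine monotonicity of $\mathbf{I}_\beta$ and the $\mathbf{M}$-to-$\mathbf{I}_\beta$ absorption already established in the paper.
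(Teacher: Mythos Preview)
Your proposal is correct and follows essentially the same route as the paper: the paper also absorbs $\eta$ into divergence form by solving $-\Delta u_0=\eta$ on a large ball $B_R\supset\Omega$ (your second alternative for $\mathbf{F}=\nabla z$), uses the Green-kernel bound $|\nabla u_0|\le C\,\mathbf{I}_1(\eta)$, and then applies Theorem~\ref{theo:I-alpha} to the reformulated problem. Your write-up is in fact more explicit than the paper's about the $\mathbf{M}$-absorption step $\mathbf{I}_\beta(\mathbf{M}h)\le C\,\mathbf{I}_\beta(h)$; note that this follows from Fefferman--Stein combined with the $\mathcal{A}_1$ property of the Riesz kernel (the statement $\mathbf{M}(\mathbf{I}_\beta h)\le C\,\mathbf{I}_\beta h$ that you cite is the latter ingredient, not the inequality itself).
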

\begin{proof}
Let $B_R:=B_R(0) \supset \Omega$ and $u_0$ be the unique solution to the following equation
\begin{align*}
\begin{cases} - \Delta u_0 & = \ \eta  , \ \mbox{ in } \ B_R, \\ \hspace{0.5cm} u_0 & = \  0, \ \mbox{ on } \partial B_R.\end{cases}
\end{align*}
It is well known that the fundamental solution to this equation satisfies 
\begin{align}\label{est:fund}
|\nabla u_0(x)| \le |\nabla_x G(\eta )(x)| \le C_1\mathbf{I}_1(\eta )(x),
\end{align}
where $G$ denotes the Green kernel. Applying Theorem~\ref{theo:I-alpha} to equation~\eqref{eq-lem:app-1}, we can find $\delta>0$ such that if $\Omega$ is a $(\delta,R_0)$-Reifenberg flat domain satisfying $[A]_{R_0} \le \delta$ for some $R_0>0$, then for $\beta \in (0,n)$
\begin{align*}
\mathbf{I}_{\beta}(|\nabla u|^p)  \le C_2 \left(\mathbf{I}_{\beta}(|\nabla u_0|^{\frac{p}{p-1}}) + \mathbf{I}_{\beta}(|f|^{\frac{p}{p-1}} + |\nabla g|^p) \right).
\end{align*}
It deduces from~\eqref{est:fund} to
\begin{align*}
\mathbf{I}_{\beta}(|\nabla u|^p)  \le  C^* \left(\mathbf{I}_{\beta} (\mathbf{I}_1(\eta )^{\frac{p}{p-1}}) + \mathbf{I}_{\beta}(|f|^{\frac{p}{p-1}} + |\nabla g|^p)\right),
\end{align*}
which finishes the proof.
\end{proof}

\medskip

\begin{proof}[Proof of Theorem~\ref{theo:existence_Ialpha}]
Let us first introduce a set $\mathcal{S}$ defined by
\begin{align}\label{S-def}
\mathcal{S} = \left\{v \in W^{1,p}(\Omega): \ \ \mathbf{I}_{\alpha}(|\nabla v|^p) \le \Lambda \mathbf{I}_{\alpha} (|\mathcal{F}|^p) \ \mbox{ a.e. in } \ \mathbb{R}^n\right\},
\end{align}
where the positive constant $\Lambda$ will be determined later. For every $v \in \mathcal{S}$, we define $T(v) := u$ as the unique solution to the following equation
\begin{align}\label{eq:app-2}
\begin{cases} -\mathrm{div}(A(x,\nabla u)) & = \ \mathbf{I}_{\beta}(|\nabla v|^p)^q + \mathrm{div}(f), \ \mbox{ in } \ \Omega, \\ \hspace{1.5cm} u & = \  g, \ \mbox{ on } \partial \Omega.\end{cases}
\end{align}
We next to show that one can find $\Lambda>0$ such that the mapping  $T:  \mathcal{S} \to \mathcal{S}$, $v \mapsto T(v) = u$ defined by~\eqref{eq:app-2} is well-defined.  In other words, we need to prove that $u = T(v) \in \mathcal{S}$ for all $v \in \mathcal{S}$. Indeed, thanks to Lemma~\ref{lem:app-1}, one obtains the following estimate
\begin{align}\label{est:app-1}
\mathbf{I}_{\alpha}(|\nabla u|^p) \le C^* \left(\mathbf{I}_{\alpha}(\mathbf{I}_1(\eta)^{\frac{p}{p-1}}) + \mathbf{I}_{\alpha}(|\mathcal{F}|^p)  \right), \quad \mbox{ a.e. in } \ \mathbb{R}^n,
\end{align}
where $\eta = (\mathbf{I}_{\beta}(|\nabla v|^p))^q \le \Lambda^q \mathbf{I}_{\beta}(|\mathcal{F}|^p)^q$ for $v \in \mathcal{S}$. It follows from~\eqref{est:app-1} that
\begin{align}\label{est:app-2}
\mathbf{I}_{\alpha}(|\nabla u|^p) \le C^* \left(\Lambda^{\frac{pq}{p-1}}\mathbf{I}_{\alpha}(\mathbf{I}_1(\mathbf{I}_{\beta}(|\mathcal{F}|^p)^q)^{\frac{p}{p-1}}) + \mathbf{I}_{\alpha}(|\mathcal{F}|^p)  \right), \quad \mbox{ a.e. in } \ \mathbb{R}^n.
\end{align}
Applying Lemma~\ref{lem:cond-varphi} under condition~\eqref{eq:ReiszCapa_cond} with $d\nu = |\mathcal{F}|^pdx$, $\beta_1 = \beta$, $\beta_2 = 1$, $\beta_3 = \alpha$ and $s = \frac{p}{p-1}$, there holds
\begin{align*}
\mathbf{I}_{\alpha}\left(\mathbf{I}_{1}\left(\mathbf{I}_{\beta}(|\mathcal{F}|^p)^q\right)^{\frac{p}{p-1}}\right)  \le C\varepsilon \mathbf{I}_{\alpha}(|\mathcal{F}|^p), \quad \mbox{ a.e. in } \ \mathbb{R}^n.
\end{align*}
Combining this inequality to~\eqref{est:app-2}, we can conclude that
\begin{align*}
\mathbf{I}_{\alpha}(|\nabla u|^p) \le C^* (C\varepsilon \Lambda^{\frac{pq}{p-1}} + 1) \mathbf{I}_{\alpha}(|\mathcal{F}|^p) \le \Lambda \mathbf{I}_{\alpha}(|\mathcal{F}|^p), \quad \mbox{ a.e. in } \ \mathbb{R}^n,
\end{align*}
which yields that $u \in \mathcal{S}$. We remark that in the last inequality, we may easily choose $\Lambda>0$ such that $C^* (C\varepsilon \Lambda^{\frac{pq}{p-1}} + 1) \le \Lambda$ for some $\varepsilon$ small enough. 

On the other hand, it is easy to check that the set $\mathcal{S}$ is convex, closed, the mapping $T$ is continuous and $T(S)$ is precompact under the strong topology of $W^{1,p}(\Omega)$. By the Schauder fixed point theorem,  the mapping $T$ admits at least one fixed point in $\mathcal{S}$. Finally, the estimate~\eqref{eq:theo-D} is obtained by the definition of $\mathcal{S}$ in~\eqref{S-def}. It completes the proof.
\end{proof} 
\medskip

We now give a proof of Theorem~\ref{theoE}. We emphasize that the equation~\eqref{eq:existI-b} considered in this theorem is simpler than~\eqref{eq:existI} just for simplicity of the computation.

\medskip

\begin{proof}[Proof of Theorem~\ref{theoE}]
It is well-known that if $\Omega$ is a $(\delta,R_0)$-Reifenberg flat domain satisfying $[A]_{R_0} \le \delta$ for some positive constants $\delta, R_0$ and $u$ is a renormalized solution to the following equation
\begin{align*}
\begin{cases} -\mathrm{div}(A(x,\nabla v)) & = \ \eta, \ \mbox{ in } \ \Omega, \\ \hspace{1.5cm} u & = \  0, \ \mbox{ on } \partial \Omega,\end{cases}
\end{align*}
then there exists a constant $C$ such that
\begin{align*}
\int_{\mathbb{R}^n} \mathbf{I}_1(\eta)^{\frac{p}{p-1}} d\omega \le C \int_{\mathbb{R}^n} |\nabla v|^p d\omega,
\end{align*}
for all $\omega \in \mathcal{A}_{\frac{p}{p-1}} \subset \mathcal{A}_1$.
Applying this fact for the solution $u$ to equation~\eqref{eq:existI-b}, we obtain that
\begin{align*}
\mathbf{I}_{\beta}(\mathbf{I}_1(\mathbf{I}_\beta(|\nabla u|^p)^q + \mu)^{\frac{p}{p-1}}) \le C \mathbf{I}_{\beta}(|\nabla u|^{p}) \quad \mbox{ a.e. in } \ \mathbb{R}^n,
\end{align*}
which follows that
$\mathbf{I}_{\beta}(\mathbf{I}_1(\nu)^{\frac{p}{p-1}})^q \le C \nu \ \mbox{ a.e. in } \ \mathbb{R}^n,$
for $\nu = \mathbf{I}_\beta(|\nabla u|^p)^q + \mu$. Applying~\eqref{eq:lem-2Ia} in Lemma~\ref{lem:2I}, one gets
\begin{align*}
\left(\mathbf{W}_{\frac{p+(p-1)\beta}{2p-1},\frac{2p-1}{p}} (\nu)\right)^q \le C \nu \quad \mbox{ a.e. in } \ \mathbb{R}^n.
\end{align*}
It implies that the following inequality holds for any ball $B_r(x) \subset \mathbb{R}^n$:
\begin{align*}
\int_{\mathbb{R}^n}\left(\mathbf{W}_{\frac{p+(p-1)\beta}{2p-1},\frac{2p-1}{p}} (\chi_{B_r(x)}\nu)(y)\right)^q dy \le C \nu(B_r(x)), 
\end{align*}
which yields that $\nu(K) \le C \mathrm{Cap}_{\mathbf{I}_{\beta + 1 - \frac{\beta}{q}},\frac{pq}{pq-p+1}}(K),$
for any compact subset $K \subset \mathbb{R}^n$. This implies to~\eqref{eq:ReiszCapa_cond-b} with notice that $\mu(K) \le \nu(K)$.
\end{proof}

\subsection*{Acknowledgment}
The author T.-N. Nguyen was supported by Ho Chi Minh City University of Education.

\end{document}